\newcommand{\R}{\mathbb R}
\newcommand{\del}{\partial}
\newtheorem{theorem}{Theorem}[section]
\newtheorem{lemma}{Lemma}
\newtheorem{proposition}{Proposition}[section]
\newtheorem{definition}{Definition}[section]
\newlength{\defbaselineskip}
\newcommand{\setlinespacing}[2]%
          {\setlength{\baselineskip}{#1 \defbaselineskip}}
\makeatother \thispagestyle{empty}
\begin{document}
\begin{center}
 {\Large   {Fourth order weighted elliptic problem under exponential
nonlinear growth }}
\end{center}
\vspace{0.2cm}

\begin{center}
 Brahim Dridi $^{(1)}$ and  Rached Jaidane $^{(2)}$

 \
\noindent\footnotesize$^{(1)}$
 Department of Mathematics, El Manar Preparatory Institute for Engineering Studies Tunis, University of Tunis El Manar, Tunisia.
Address e-mail: dridibr@gmail.com\\
\noindent\footnotesize $^{(2)}$ Department of Mathematics, Faculty of Science of Tunis, University of Tunis El Manar, Tunisia.\\
 Address e-mail: rachedjaidane@gmail.com\\
\end{center}

\vspace{0.5cm}
\begin{abstract}
We deal with  nonlinear   weighted biharmonic problem
in the unit ball of $\mathbb{R}^{4}$. The weight is
of logarithm type.
 The nonlinearity is critical  in view of Adam's
 inequalities in the weighted Sobolev space $W^{2,2}_{0}(B,w)$. We  prove the existence of non trivial
 solutions via the critical point theory. The main difficulty is
the loss of compactness due to the critical exponential growth of the nonlinear
term $f$. We give a new growth condition and we point out its importance
 for checking the Palais-Smale compactness condition.
\end{abstract}

\noindent {\footnotesize\emph{Keywords:} Adam's inequality,  Moser-Trudinger's inequality, Nonlinearity of exponential growth, Mountain pass method, Compactness level.\\
\noindent {\bf $2010$ Mathematics Subject classification}: $35$J$20$, $35$J$30$, $35$K$57$, $35$J$60$.}

\section{Introduction and Main results}
In  this paper, we consider  the  following elliptic nonlinear problem:
\begin{equation}\label{eq:1.1.1}
  \displaystyle \left\{
      \begin{array}{rclll}
 L(u):=\Delta ( w(x)  \Delta u)&= &f(x,u) &\mbox{ in }& B, \\
        u=\frac{\partial u}{\partial n}  & =&0 & \mbox{ on }& \partial B,
      \end{array}
          \right.
\end{equation}
where $B=B(0,1)$ is the unit open ball in $\R^{4}$. The weight is given by
\begin{equation}\label{eq:1.2}
w(x)=\big(\log \frac{e}{|x|}\big)^{\beta}, \beta\in (0,1) ,
\end{equation}
The nonlinearity $f(x, t)$ is continuous in $B\times \mathbb{R}$ and behaves like
$\exp\{\alpha t^{\frac{2}{1-\beta}}\}$ as $ t\rightarrow+ \infty$, for some $\alpha >0$ and where  $\frac{\partial u}{\partial n}$ denotes the outer normal derivative of $u$
on $\partial B$.\\
Problems of critical exponential growth in second order elliptic equations in dimension $N=2$
\begin{equation*}-\Delta u=f(x,u)~~\mbox{in}~~\Omega\subset \mathbb{R}^{2}.\end{equation*} have been studied considerably  \cite{Adi1,FMR,LL1,MS}. In dimension $N\geq 2$, the critical exponential growth is given by the well known
Trudinger-Moser inequality \cite{JMo,NST} $$\displaystyle\sup_{\int_{\Omega} |\nabla u|^{N}\leq1}\int_{\Omega}e^{\alpha|u|^{\frac{N}{N-1}}}dx<+\infty~~\mbox{if and only if}~~\alpha\leq \alpha_{N},$$
where $\alpha_{N}=\omega_{N-1}^{\frac{1}{N-1}}$  with $\omega_{N-1}$ is the area of the unit sphere $S^{N-1}$ in $\mathbb{R}^{N}$.\\
Later, the Trudinger-Moser inequality was improved to weighted inequalities \cite{ CR1, CR2}. The influence of the weight in the Sobolev norm was studied as the compact embedding \cite{Kuf}.\\
  When the weight is of logarithmic type, Calanchi and Ruf \cite{CR3} extend the Trudinger-Moser inequality and proved the following results in the weighted Sobolev  space for radial functions $$W_{0,rad}^{1,N}(B,\rho)=cl\{u \in
C_{0,rad}^{\infty}(B)~~|~~\int_{B}|\nabla u|^{N}\rho(x)dx <\infty\}:$$
\begin{theorem}\cite{CR2} \label{th1.1}\begin{itemize}\item[$(i)$] ~~Let $\beta\in[0,1)$ and let $\rho$ given by $ \rho(x)=\big(\log \frac{1}{|x|}\big)^{\beta}$, then
  \begin{equation*}
 \int_{B} e^{|u|^{\gamma}} dx <+\infty, ~~\forall~~u\in W_{0,rad}^{1,N}(B,\rho),~~
  \mbox{if and only if}~~\gamma\leq \gamma_{N,\beta}=\frac{N}{(N-1)(1-\beta)}=\frac{N'}{1-\beta}
 \end{equation*}
and
 \begin{equation*}
 \sup_{\substack{u\in W_{0,rad}^{1,N}(B,\rho) \\ \int_{B}|\nabla u|^{N}w(x)dx\leq 1}}
 \int_{B}~e^{\alpha|u|^{\gamma_{N,\beta} }}dx < +\infty~~~~\Leftrightarrow~~~~ \alpha\leq \alpha_{N,\beta}=N[\omega^{\frac{1}{N-1}}_{N-1}(1-\beta)]^{\frac{1}{1-\beta}}
 \end{equation*}
where $\omega_{N-1}$ is the area of the unit sphere $S^{N-1}$ in $\R^{N}$ and $N'$ is the H$\ddot{o}$lder conjugate of $N$.
\item [$(ii)$] Let $\rho$ given by $\rho(x)=\big(\log \frac{e}{|x|}\big)^{N-1}$, then
  \begin{equation*}\label{eq:71.5}
 \int_{B}exp\{e^{|u|^{\frac{N}{N-1}}}\}dx <+\infty, ~~~~\forall~~u\in W_{0,rad}^{1,N}(B,\rho)
 \end{equation*} and
 \begin{equation*}\label{eq:71.6}
\sup_{\substack{u\in W_{0,rad}^{1,N}(B,\rho) \\  \|u\|_{\rho}\leq 1}}
 \int_{B}exp\{\beta e^{\omega_{N-1}^{\frac{1}{N-1}}|u|^{\frac{N}{N-1}}}\}dx < +\infty~~~~\Leftrightarrow~~~~ \beta\leq N,
 \end{equation*}
where $\omega_{N-1}$ is the area of the unit sphere $S^{N-1}$ in $\R^{N}$ and $N'$ is the H$\ddot{o}$lder conjugate of $N$.\end{itemize}
\end{theorem}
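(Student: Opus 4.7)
The plan is to use radial rearrangement together with an explicit change of variables to reduce the weighted Moser--Trudinger inequality on $B\subset\R^N$ to a one-dimensional Moser-type inequality on the half-line, then invoke the classical 1D Moser lemma.

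First, since $\rho$ is radial and radially decreasing, the weighted P\'olya--Szeg\H{o} inequality and the equimeasurability of symmetric decreasing rearrangement let me restrict to radial nonincreasing $u=u(r)$ with $u(1)=0$, so that
\begin{equation*}
\int_B|\nabla u|^N\rho\,dx=\omega_{N-1}\int_0^1|u'(r)|^N(\log(1/r))^\beta r^{N-1}\,dr,
\end{equation*}
\begin{equation*}
\int_B e^{|u|^\gamma}dx=\omega_{N-1}\int_0^1 e^{|u(r)|^\gamma}r^{N-1}\,dr.
\end{equation*}
The Moser substitution $r=e^{-t/N}$, $\phi(t)=u(e^{-t/N})$, transforms these (up to explicit constants) into $\int_0^\infty t^\beta|\phi'(t)|^N\,dt$ and $\int_0^\infty e^{|\phi(t)|^\gamma-t}\,dt$, reducing the problem to a weighted 1D Moser inequality for $\phi$ on $(0,\infty)$ with $\phi(0)=0$.

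Next I would strip the weight $t^\beta$ from the Dirichlet energy by the nonlinear substitution $\tau=\frac{N-1}{N-1-\beta}\,t^{(N-1-\beta)/(N-1)}$, $\psi(\tau)=\phi(t)$, which converts the weighted energy into the unweighted $\int_0^\infty|\psi'(\tau)|^N\,d\tau$. The classical one-dimensional Moser lemma (if $\psi(0)=0$ and $\int_0^\infty|\psi'|^N\,d\tau\leq 1$, then $\int_0^\infty\exp(|\psi|^{N/(N-1)}-\tau)\,d\tau$ is bounded by a dimensional constant) then controls $\psi$. Pulling back through both substitutions and applying Young's inequality to match the polynomial-in-$\tau$ growth of $|\psi|^{N/(N-1)}$ with the pulled-back exponent isolates the critical pair: matching the powers singles out $\gamma_{N,\beta}=N/[(N-1)(1-\beta)]$, and optimizing the constant in Young's inequality gives the sharp $\alpha_{N,\beta}$. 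Necessity of $\gamma\leq\gamma_{N,\beta}$ and $\alpha\leq\alpha_{N,\beta}$ is established by testing against a weight-adapted Moser family, namely radial functions $u_n(r)$ behaving like $C_n(\log(1/r))^{(N-1)(1-\beta)/N}$, cut off at scales $e^{-n}$ and $1$ and normalized so that $\int_B|\nabla u_n|^N\rho\,dx=1$; a direct computation then gives $\int_B e^{\alpha|u_n|^\gamma}dx\to\infty$ whenever either $\gamma>\gamma_{N,\beta}$, or $\gamma=\gamma_{N,\beta}$ and $\alpha>\alpha_{N,\beta}$.

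Part (ii), corresponding to the borderline weight $\rho(x)=(\log(e/|x|))^{N-1}$, follows the same outline, but now the H\"older-type estimate in the reduced 1D problem produces only a double-logarithmic bound on $|\psi(\tau)|$, turning the inner exponential $e^{\omega_{N-1}^{1/(N-1)}|u|^{N/(N-1)}}$ into a power of $\tau$; composing with the outer exponential decay then yields the doubly-exponential integrand stated, with the sharp threshold $\beta\leq N$ extracted from the analogous Young balancing. The hardest step throughout is the sharpness of the constant $\alpha_{N,\beta}$ at the critical exponent in (i): pointwise H\"older bounds on $|\psi|$ are not tight enough, and as in Moser's original argument I would have to replace them by a truncation/dichotomy on $\psi$, splitting its Dirichlet energy according to whether it concentrates in $(0,\tau_0)$ or in $(\tau_0,\infty)$ for a well-chosen $\tau_0$, and then carefully translating the resulting bound back through both changes of variables.
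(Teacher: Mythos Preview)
The paper does not prove this theorem at all: it is quoted verbatim from Calanchi--Ruf \cite{CR2} (and the related works \cite{CR1,CR3}) as background, with no argument given. So there is no ``paper's own proof'' to compare your proposal against.

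That said, your outline is essentially the strategy Calanchi and Ruf use in the cited references: reduce to a one-dimensional problem via the Moser change of variable $r=e^{-t/N}$, remove the power weight $t^{\beta}$ by a second substitution of the form $\tau=c\,t^{(N-1-\beta)/(N-1)}$, apply the classical one-dimensional Moser lemma, and test sharpness with logarithmic Moser-type sequences. One small correction: the space in the statement is already $W^{1,N}_{0,\mathrm{rad}}(B,\rho)$, so no P\'olya--Szeg\H{o} rearrangement step is needed (and indeed a weighted P\'olya--Szeg\H{o} inequality for this particular $\rho$ would itself require justification). Also, in part~(ii) the weight is $(\log(e/|x|))^{N-1}$, which after the Moser substitution becomes $(1+t/N)^{N-1}$ rather than a pure power $t^{N-1}$; the extra ``$+1$'' is what produces the shift from $\log(1/|x|)$ to $\log(e/|x|)$ and must be tracked through the second substitution to get the correct doubly-exponential form. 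These are refinements rather than gaps; your plan is sound and matches the original source the paper is citing.
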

 These results opened the way to study second order weighted elliptic problems in dimension $N\geq2$ . We cite the work of Calanchi et all  \cite{CRS}, $N=2$

$$
 \displaystyle \left\{
      \begin{array}{rclll}
   -\nabla. (\nu(x)\nabla u ) &=& \ f(x,u)& \mbox{in} & B \\
       u &>&0 &\mbox{in }& B\\
        u&=&0 &\mbox{on }&  \partial B,
      \end{array}
         \right.
      $$
with the weight $\nu(x)=\log(\frac{e}{|x|})$ and where the function $f(x, t)$ is continuous in $B \times \mathbb{R}$ and behaves like $\exp\{e^{\alpha{t^{2}}}\}~~\mbox{as}~~t\rightarrow+\infty$, for some $\alpha>0$.\\ Also, recently, Deng et all \cite{DHT} and Zhang\cite{Z}  studied the following problem \begin{equation*}
\displaystyle \left\{
\begin{array}{rclll}
-\textmd{div} (\rho(x)|\nabla u|^{N-2}  \nabla u) &=&  \displaystyle f(x,u)& \mbox{in} & B \\

u&=&0 &\mbox{on }&  \partial B,
\end{array}
\right.
\end{equation*} where $ N\geq 2$, the function $f(x, t)$ is continuous in $B \times \mathbb{R}$ and behaves like $\exp\{e^{\alpha{t^{\frac{N}{N-1}}}}\}~~\mbox{as}~~t\rightarrow+\infty$, for some $\alpha>0$. The authors proved that there is a non-trivial solution to this problem using Mountain Pass theorem.\\ Also, we mention that Baraket et all \cite{BJ} studied the following non-autonomous weighted elliptic equations \begin{equation*}\label{eq:1.1}
\displaystyle \left\{
\begin{array}{rclll}
L:=-\textmd{div} (\rho(x)|\nabla u|^{N-2}  \nabla u)+\xi(x)|u|^{N-2}u &=&  \displaystyle f(x,u)& \mbox{in} & B \\
u &>&0 &\mbox{in }& B\\
u&=&0 &\mbox{on }&  \partial B,
\end{array}
\right.
\end{equation*}
where B is the unit ball of $\mathbb{R}^{N}$, $ N>2$ , $f(x, t)$ is continuous in $B\times \mathbb{R}$ and behaves like
$\exp\{e^{\alpha t^{\frac{N}{N-1}}}\}$ as $ t\rightarrow+ \infty$, for some $\alpha >0$. $\xi :B\rightarrow \mathbb{R}$ is a positive continuous function satisfying some conditions. The weight $\rho(x)$ is given by
$\rho(x)=\big(\log \frac{e}{|x|}\big)^{N-1}\cdot$\\
  The biharmonic equation in dimension $N>4$ \begin{equation*}\Delta^{2} u=f(x,u)~~\mbox{in}~~\Omega\subset \mathbb{R}^{N},\end{equation*}where the nonlinearity $f$ has subcritical and critical polynomial growth of power less than $\frac{N+4}{N-4}$, have been extensively studied \cite{BGT,EFJ,GGS,RW} .\\  \\
 For bounded domains $\Omega\subset \mathbb{R}^{4}$, in \cite{Ada,BRFS} the authors proved the following Adams' inequality  \begin{equation*}
 \sup_{\substack{u\in S }}
 \int_{\Omega}~(e^{\alpha u^{2})}-1)dx < +\infty~~~~\Leftrightarrow~~~~ \alpha\leq 32 \pi^{2}
 \end{equation*}
 where $$S=\{u\in W^{2,2}_{0}(\Omega)~~|~~\displaystyle\big(\int_{\Omega}|\triangle u|^{2}dx \big)^{\frac{1}{2}}\leq 1\}.$$
This last result opened the way to study fourth-order problems with subcritical or critical nonlinearity (see \cite{FS} , \cite{Chen}).\\
 We study the existence of the nontrivial solutions when the nonlinear terms have
the critical exponential growth in the sense of Adams inequalities \cite{WZ}. Our approach
is variational methods such as the Mountain Pass Theorem with Palais-Smale
condition combining with a concentration compactness result. \\ More precisely, let $\Omega \subset \R^{4}$ be a bounded domain and $w\in L^{1}(\Omega)$ be a nonnegative function, the weighted sobolev space is defined as
$ W_{0}^{2,2}(\Omega,w)=cl\{u\in
C_{0}^{\infty}(\Omega)~~|~~\displaystyle\int_{\Omega}|\triangle u|^{2}w(x)dx <\infty\}.$
We will restrict our attention to radial functions and then consider the subspace
 \begin{equation}\label{eq:1.3}
\mathbf{W}=W_{0,rad}^{2,2}(B,w)=cl\{u \in
C_{0,rad}^{\infty}(B)~~|~~\int_{B}|\triangle u|^{2}w(x)~~dx <\infty\},
 \end{equation}
 equipped with norm
  $$\|u\|=\displaystyle\big(\int_{B}|\triangle u|^{2}w(x)dx\big)^{\frac{1}{2}},~~w(x)=\big(\log \frac{e}{|x|}\big)^{\beta}$$ which comes from the scalar product
$$<u,v>=\int_{B}\Delta u.\Delta v ~(\log\frac{e}{\vert x\vert})^{\beta}~dx.$$
   The norm $\|u\|=\displaystyle\big(\int_{B}|\triangle u|^{2}w(x)dx\big)^{\frac{1}{2}},$ and   $$\|u\|_{W_{0,rad}^{2,2}(B,w)}=\displaystyle\big(\int_{B}u^{2}dx +\int_{B} |\nabla u|^{2}~dx+\int_{B}|\triangle u|^{2}w(x)dx \big)^{\frac{1}{2}}$$ are equivalent (see Lemma \ref{lem31}).
\\The choice of the weight  and the space $W_{0,rad}^{2,2}(B,w)$ are  motivated by the following inequality of Adam's type.
\begin{theorem}\cite{WZ} \label{th1.1} ~~Let $\beta\in(0,1)$ and let $w$ given by (\ref{eq:1.2}), then

 \begin{equation}\label{eq:1.4}
 \sup_{\substack{u\in W_{0,rad}^{2,2}(B,w) \\  \|u\|\leq 1}}
 \int_{B}~e^{\displaystyle\alpha|u|^{\frac{2}{1-\beta} }}dx < +\infty~~~~\Leftrightarrow~~~~ \alpha\leq \alpha_{\beta}=4[8\pi^{2}(1-\beta)]^{\frac{1}{1-\beta}}
 \end{equation}

 \end{theorem}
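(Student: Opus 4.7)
The plan is to reduce this Adams-type inequality on $W^{2,2}_{0,rad}(B,w)$ to a sharp one-dimensional weighted Moser inequality on $[1,\infty)$ via the substitution $t=\log(e/|x|)$, extract the constant $\alpha_\beta$ from an explicit kernel representation, and then verify sharpness with a concentrating family.

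Setting $r=e^{1-t}$ and $\phi(t)=u(r)$, a direct computation for radial $u$ with $u(1)=u'(1)=0$ gives $\Delta u=(\phi''(t)-2\phi'(t))/r^{2}$ and
\[
\|u\|^{2}=2\pi^{2}\int_{1}^{\infty}(\phi''-2\phi')^{2}\,t^{\beta}\,dt,\qquad \int_{B}e^{\alpha|u|^{2/(1-\beta)}}\,dx=2\pi^{2}e^{4}\int_{1}^{\infty}e^{\alpha|\phi(t)|^{2/(1-\beta)}}\,e^{-4t}\,dt,
\]
so the whole problem reduces to controlling $\phi(t)$ by the weighted $L^{2}$-norm of $F:=\phi''-2\phi'$ on $[1,\infty)$.

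Integrating the ODE $\phi''-2\phi'=F$ with $\phi(1)=\phi'(1)=0$ by the integrating factor $e^{-2t}$ and using integrability at infinity yields a kernel representation $\phi(t)=\int_{1}^{\infty}K(t,s)F(s)\,ds$ with
\[
K(t,s)=\begin{cases}-\tfrac{1}{2}(1-e^{2-2s}),& 1\le s\le t,\\[2pt] -\tfrac{1}{2}(e^{2(t-s)}-e^{2-2s}),& s\ge t,\end{cases}
\]
so that $|K|\le \tfrac12$ globally, $K(t,s)\to-\tfrac12$ for $s\le t$ away from the endpoints, and $K(t,\cdot)$ decays exponentially for $s>t$. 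Cauchy--Schwarz against $s^{\beta}\,ds$ then gives
\[
|\phi(t)|^{2}\le\frac{\|u\|^{2}}{2\pi^{2}}\int_{1}^{\infty}K(t,s)^{2}s^{-\beta}\,ds=\frac{\|u\|^{2}}{2\pi^{2}}\cdot\frac{t^{1-\beta}}{4(1-\beta)}\,(1+o(1)),\qquad t\to\infty.
\]
For $\|u\|\le 1$ this translates into $\alpha|\phi(t)|^{2/(1-\beta)}\le\alpha\,t/[8\pi^{2}(1-\beta)]^{1/(1-\beta)}(1+o(1))$, so the exponential integral is finite as soon as $\alpha<\alpha_\beta:=4[8\pi^{2}(1-\beta)]^{1/(1-\beta)}$.

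The endpoint $\alpha=\alpha_\beta$ is the main technical obstacle: here Cauchy--Schwarz is borderline divergent, and one must refine in the Moser--Carleson--Chang style by splitting $[1,\infty)$ into $[1,t-L]$ and a boundary layer $[t-L,\infty)$ on which a sharp one-dimensional Moser estimate replaces the crude $L^{2}$ bound; this preserves the constant $\alpha_\beta$ and upgrades finiteness of the supremum to the closed range $\alpha\le\alpha_\beta$. For optimality, one uses a Moser family with $F_{n}(s)=K(n,s)\,s^{-\beta}/c_{n}$ supported on $[1,n]$, normalized so that $\|u_n\|=1$; then $\phi_{n}(t)\simeq\phi_{n}(n)$ for $t\ge n$ with $|\phi_{n}(n)|^{2/(1-\beta)}\sim n/[8\pi^{2}(1-\beta)]^{1/(1-\beta)}$, and the contribution of the region $\{t\ge n\}$ alone to the integral diverges whenever $\alpha>\alpha_\beta$, proving sharpness of $\alpha_\beta$.
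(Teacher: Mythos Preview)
The paper does not prove this theorem: it is quoted verbatim from Wang--Zhu \cite{WZ}, and only the associated pointwise radial estimate (Lemma~\ref{lem2.1}) is reproduced here without proof. There is therefore no argument in the present paper against which to compare your attempt.

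Regarding your proposal on its own merits: the substitution $t=\log(e/|x|)$ and the resulting one-dimensional expressions for $\|u\|^{2}$ and for $\int_{B}e^{\alpha|u|^{\gamma}}dx$ are correct. Your kernel formula for $\phi$ is also correct \emph{provided} one uses the hidden constraint $\int_{1}^{\infty}e^{-2s}F(s)\,ds=0$, which is exactly the condition $\phi'(1)=0$; without it the representation you wrote does not match the boundary data. Granting this, Cauchy--Schwarz against $s^{-\beta}$ reproduces precisely the bound of Lemma~\ref{lem2.1} with the sharp constant, and this handles the open range $\alpha<\alpha_{\beta}$ uniformly in $u$. Your sharpness family is in the right spirit, though as written the test functions $F_{n}=K(n,\cdot)s^{-\beta}/c_{n}$ do not satisfy $\phi_{n}'(1)=0$, so they do not correspond to admissible $u_{n}\in\mathbf{W}$; a lower-order correction is needed there.

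The real gap is the endpoint $\alpha=\alpha_{\beta}$. You correctly flag that Cauchy--Schwarz is borderline and that a Moser--Adams--Carleson--Chang refinement is required, but you do not actually perform it: saying that on the boundary layer ``a sharp one-dimensional Moser estimate replaces the crude $L^{2}$ bound'' is a restatement of what has to be proved, not a proof. In Wang--Zhu this step is the technical heart of the paper, carried out by adapting the Adams/Fontana kernel machinery (O'Neil's inequality and the Adams lemma on nonincreasing functions) to the logarithmic weight. Your outline isolates the correct constant and the correct obstruction, but the critical case --- and hence the ``$\Leftarrow$'' half of the equivalence at $\alpha=\alpha_{\beta}$ --- remains unproved.
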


Let $\gamma:=\displaystyle\frac{2}{1-\beta}$. In view  of inequality (\ref{eq:1.4}), we say that $f$ has critical growth at $+\infty$ if there exists some $\alpha_{0}>0$,
\begin{equation}\label{eq:1.5}
\lim_{s\rightarrow +\infty}\frac{|f(x,s)|}{e^{\alpha s^{\gamma}}}=0,~~~\forall~\alpha~~\mbox{such that}~~ \alpha>\alpha_{0} ~~~~
\mbox{and}~~~~\lim_{s\rightarrow +\infty}\frac{|f(x,s)|}{e^{\alpha s^{\gamma}}}=+\infty,~~\forall~ \alpha<\alpha_{0} .
\end{equation}
Let us now state our results.
We suppose that $f(x,t)$ satisfies the following hypothesis:
\begin{enumerate}
\item[$(H_{1})$] $f: B \times \mathbb{R}\rightarrow\mathbb{R}$ is continuous, positive, radial in $x$ and $f(x,t) = 0$ for $t \leq 0;$
\item[$(H_{2})$] There exists $t_{0} > 0$ and $M > 0$ such that for all
$t>t_{0}$ and for all $x\in B$ we have
 $$0 < F(x,t)\leq Mf(x,t),$$
  where
$$F(x,t)=\displaystyle\int_{0}^{t}f(x,s)ds;$$
\item [$(H_{3})$] $0<F(x,t)\leq \displaystyle\frac{1}{2}f(x,t)t$,~~ $\forall
t>0, \forall x\in B.$
\item[$(H_{4})$] $~~~\displaystyle\limsup_{t\rightarrow 0}\frac{2 F(x,t)}{t^{2}}< \lambda_{1} ~~~~ \mbox{uniformly in}~~ x,$
\end{enumerate}
We denote by
$$\lambda_{1}=\inf_{
\substack{u\in \mathbf{W} \\  u\neq0} }\displaystyle\frac{\displaystyle\int_{B}|\triangle u|^{2}w(x)dx}{\displaystyle\int_{B}|u|^{2}dx},$$
the first eigenvalue of $( L, \mathbf{W})$. It is well known that $\lambda_{1}$ is isolated simple positive eigenvalue and has a positive bounded associated eigenfunction, \cite{DKN}.\\
We say that u is a solution to the problem (\ref{eq:1.1.1}), if u is a weak solution in the following sense:
\begin{definition}\label {def2.1}
We say that a function $u\in \mathbf{W}$ is a solution of the problem (\ref{eq:1.1.1}) if
\begin{equation*}\label {eq:2.2}
\int_{B}\triangle u .\triangle\varphi~ w(x) dx =
\int_{B}f(x,u) \varphi dx,~~\forall~\varphi \in \mathbf{W}.
\end{equation*}
\end{definition}
Let $\mathcal{J}:\mathbf{W} \rightarrow \R$ be the functional given by
 \begin{equation}\label{eq:1.6}
\mathcal{J}(u)=\frac{1}{2}\int_{B}|\triangle u|^{2}w(x)dx-\int_{B}F(x,u)dx,
\end{equation}
where
$$ F(x,t)=\displaystyle\int_{0}^{t}f(x,s)ds.$$
It is well-known that seeking a
weak solution of (\ref{eq:1.1.1}) is equivalent to finding a nonzero critical point of $\mathcal{J}$.\\
 Our Euler–Lagrange functional does not
satisfy the Palais–Smale condition at all level anymore. To overcome the verification
of compactness of Euler–Lagrange functional at some suitable level,
we construct Adams type functions,
which are extremal to the
inequality (\ref{eq:1.4}). Our result is
as follows :
\begin{theorem}\label{th1.3}~~
Assume that $f(x,t)$ has a  critical growth at $+\infty$ for some $\alpha_{0}$ and  satisfies
the conditions $(H_{1}), (H_{2}), (H_{3})$ and $(H_{4})$. If in addition $f(x,t)$ satisfies
the asymptotic condition  $$(H_{5})~~~~~~\displaystyle\lim_{t\rightarrow \infty}\frac{f(x,t)t}{e^{\alpha_{0}t^{\gamma}}}\geq
\gamma_{0}~~~~\mbox{ uniformly in}~~ x, ~~\mbox{with}~~~~\gamma_{0}>  \frac{1024(1-\beta)}{\alpha^{1-\beta}_{0}},$$ then the
problem (\ref{eq:1.1.1}) has a nontrivial solution.
\end{theorem}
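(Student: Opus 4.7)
\medskip
\noindent\textbf{Proof plan.} The natural strategy is to apply the Mountain Pass Theorem to the Euler--Lagrange functional $\mathcal{J}$ defined in (\ref{eq:1.6}). First I would verify the mountain pass geometry: hypothesis $(H_{4})$ combined with the variational characterization of $\lambda_{1}$ and Adams' inequality (\ref{eq:1.4}) (applied to an $\e$-small exponent to handle the exponential term) gives $\mathcal{J}(u)\geq \delta>0$ on a small sphere $\|u\|=r$, while $(H_{2})$ (which implies an exponential lower bound $F(x,t)\gtrsim e^{t/M}$ for $t$ large) shows that $\mathcal{J}(tv)\to-\infty$ as $t\to+\infty$ for any fixed positive $v\in\mathbf{W}$. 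Consequently $\mathcal{J}$ admits a minimax level
\begin{equation*}
c=\inf_{\gamma\in\Gamma}\max_{t\in[0,1]}\mathcal{J}(\gamma(t)),\qquad \Gamma=\{\gamma\in C([0,1],\mathbf{W})\mid \gamma(0)=0,\ \mathcal{J}(\gamma(1))<0\},
\end{equation*}
and a corresponding Palais--Smale sequence $(u_{n})\subset\mathbf{W}$ at level $c$.

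The next step is to establish that any such $(PS)_{c}$ sequence is bounded in $\mathbf{W}$: combining $\mathcal{J}(u_{n})\to c$ with $\langle \mathcal{J}'(u_{n}),u_{n}\rangle=o(\|u_{n}\|)$ and using $(H_{3})$ in the form $F(x,t)\leq \tfrac12 f(x,t)t$ yields $\tfrac12\|u_{n}\|^{2}\leq c+o(1)+o(\|u_{n}\|)$, hence boundedness. Up to a subsequence we may assume $u_{n}\rightharpoonup u$ in $\mathbf{W}$, $u_{n}\to u$ a.e. and in every $L^{q}(B)$ by the compactness of the embedding $\mathbf{W}\hookrightarrow L^{q}(B)$ for $q\geq 1$ coming from the weighted Adams inequality and the radial symmetry. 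Standard arguments then identify $u$ as a weak solution of (\ref{eq:1.1.1}); the delicate point is to pass to the limit in $\int_{B} f(x,u_{n})\varphi\, dx$, which requires an $L^{1}$-convergence of $f(x,u_{n})$ and, crucially, strong convergence $u_{n}\to u$ in $\mathbf{W}$ in order to exclude the trivial solution $u\equiv 0$.

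The core obstacle, and the place where the critical growth really bites, is proving that the $(PS)_{c}$ sequence is strongly convergent. The usual route is to fix a compactness threshold
\begin{equation*}
c^{\ast}=\frac{1-\beta}{2}\Bigl(\frac{\alpha_{\beta}}{\alpha_{0}}\Bigr)^{1-\beta},
\end{equation*}
derived from (\ref{eq:1.4}) via a concentration--compactness argument: if $c<c^{\ast}$, then $\|u_{n}\|^{2}$ stays below the threshold needed so that $e^{\alpha_{0} u_{n}^{\gamma}}$ is uniformly bounded in some $L^{p}(B)$, $p>1$, thereby yielding $f(x,u_{n})\to f(x,u)$ in $L^{1}(B)$ and ultimately strong convergence in $\mathbf{W}$. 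The task thus reduces to exhibiting a path along which $\mathcal{J}$ stays strictly below $c^{\ast}$.

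To produce such a path I would plug into $\mathcal{J}$ the Adams--Moser-type extremals $M_{n}$ associated with the weighted inequality (\ref{eq:1.4})---radial, compactly supported, normalized by $\|M_{n}\|=1$, concentrating logarithmically at the origin---and estimate $\max_{t\geq 0}\mathcal{J}(tM_{n})$. Using the asymptotic profile of $M_{n}$ together with the lower bound $f(x,t)t\geq (\gamma_{0}-o(1))e^{\alpha_{0}t^{\gamma}}$ from $(H_{5})$, a careful computation shows that the excess in the exponential term dominates the loss in the quadratic term precisely when $\gamma_{0}$ exceeds the explicit constant $1024(1-\beta)/\alpha_{0}^{1-\beta}$, giving $\max_{t\geq 0}\mathcal{J}(tM_{n})<c^{\ast}$ for $n$ large. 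This is the main technical obstacle of the proof: the constant in $(H_{5})$ is calibrated exactly so that this inequality is attainable. Once $c<c^{\ast}$ is secured, the previous compactness step produces a nontrivial critical point of $\mathcal{J}$, completing the proof.
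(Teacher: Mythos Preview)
Your overall architecture matches the paper's: mountain pass geometry, a compactness threshold for $(PS)_{c}$, and a level estimate via Adams-type extremals calibrated by $(H_{5})$. However, two steps as written do not go through.

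\textbf{Boundedness of the $(PS)_{c}$ sequence.} Your argument uses only $(H_{3})$, i.e.\ $F(x,t)\leq \tfrac12 f(x,t)t$, and claims this yields $\tfrac12\|u_{n}\|^{2}\leq c+o(1)+o(\|u_{n}\|)$. But combining $\mathcal{J}(u_{n})=c+o(1)$ with $\langle\mathcal{J}'(u_{n}),u_{n}\rangle=o(\|u_{n}\|)$ and $(H_{3})$ gives only
\[
\tfrac12\|u_{n}\|^{2}=c+o(1)+\int_{B}F(x,u_{n})\,dx\leq c+o(1)+\tfrac12\|u_{n}\|^{2}+o(\|u_{n}\|),
\]
which is vacuous. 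The coefficient $\tfrac12$ in $(H_{3})$ is exactly the borderline Ambrosetti--Rabinowitz exponent and does not by itself bound the sequence. The paper instead uses $(H_{2})$: from $F\leq Mf$ one gets $F(x,t)\leq \varepsilon\, t f(x,t)$ for $t$ large (take $t>M/\varepsilon$), and then $\tfrac12\|u_{n}\|^{2}\leq C_{\varepsilon}+\varepsilon\|u_{n}\|^{2}+\varepsilon\,\varepsilon_{n}\|u_{n}\|$, which does give boundedness.

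\textbf{The compactness threshold and the Lions-type step.} Your threshold $c^{\ast}=\tfrac{1-\beta}{2}(\alpha_{\beta}/\alpha_{0})^{1-\beta}$ is off; the correct value (Proposition~\ref{Prop3.1}) is $\tfrac12(\alpha_{\beta}/\alpha_{0})^{2/\gamma}=\tfrac12(\alpha_{\beta}/\alpha_{0})^{1-\beta}$, with no extra $(1-\beta)$ factor. More importantly, your description ``if $c<c^{\ast}$ then $\|u_{n}\|^{2}$ stays below the Adams threshold'' only covers the case $u=0$. When the weak limit $u\neq 0$ one has $\|u_{n}\|^{2}\to 2\bigl(c+\int_{B}F(x,u)\bigr)$, which may exceed $(\alpha_{\beta}/\alpha_{0})^{2/\gamma}$, so the plain Adams inequality (\ref{eq:1.4}) is not enough. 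The paper closes this gap with a Lions-type concentration--compactness improvement (Theorem~\ref{lem3.1}): for $v_{n}=u_{n}/\|u_{n}\|\rightharpoonup v\neq 0$, the exponential integrability holds up to the larger exponent $p<(1-\|v\|^{2})^{-\gamma/2}$, and a short computation using $\mathcal{J}(u)\geq 0$ and $c<\tfrac12(\alpha_{\beta}/\alpha_{0})^{2/\gamma}$ shows this is exactly what is needed to bound $f(x,u_{n})$ in some $L^{q}$, $q>1$. Without this ingredient your compactness step is incomplete.
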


  In general the study of  fourth order partial differential equations is considered an interesting topic. The interest in studying such equations was stimulated by their applications in micro-electro-mechanical systems, phase field models of multi-phase systems, thin film theory,surface diffusion on solids, interface dynamics, flow in Hele-Shaw cells, see \cite{D, FW, M}. However  many applications are generated
by the weighted elliptic problems, such as  the study of traveling waves in suspension bridges, radar imaging  (see, for example \cite{AEG, LL}).\\
The main reason for this study is that, to our knowledge, there are few research taking into account both
this type of non-linearity  for a non-linear fourth order elliptic equation in the framework of
Sobolev weighted spaces.\\
\\This paper is organized as follows:\newline In Section 2,  we present some necessary preliminary knowledge
  about working space, and we
prove that the energy $\mathcal{J}$ satisfied the two geometric properties.  Section 3 is devoted for the  compactness analysis. More precisely, we prove a concentration compactness result of  Lions type and identify the first compactness level of the energy $\mathcal{J}$. Finally,  we fulfil the proof of the main results in section 4.\newline
In this work, the constant C may change from line to another and sometimes we index the constants in
order to show how they change.
\section{Functional setting and Variational formulation }
\subsection{Functional setting}
Let $\Omega \subset \R^{N}$, $N\geq2$,  be a bounded domain in $\R^{N}$ and let $w\in L^{1}(\Omega)$ be a nonnegative function. To deal with weighted operator, we need to introduce some functional spaces $L^{p}(\Omega,w)$, $W^{m,p}(\Omega,w)$, $W_{0}^{m,p}(\Omega,w)$ and some of their properties that will be used later. Let $S(\Omega)$ be the set of all measurable real-valued functions defined on $\Omega$ and two measurable functions are considered as the same element if they are equal almost everywhere.\\\\
Following  Drabek et al. and Kufner in \cite{DKN}, the weighted Lebesgue space $L^{p}(\Omega,w)$ is defined as follows:
$$L^{p}(\Omega,w)=\{u:\Omega\rightarrow \R ~\mbox{measurable};~~\int_{\Omega}w(x)|u|^{p}~dx<\infty\}$$
for any real number $1\leq p<\infty$.\\
This is a normed vector space equipped with the norm
$$\|u\|_{p,w}=\Big(\int_{\Omega}w(x)|u|^{p}~dx\Big)^{\frac{1}{p}}.$$
For $m\geq 2$, let $w$ be a given family of weight functions $w_{\tau}, ~~|\tau|\leq m,$ $w=\{w_{\tau}(x)~~x\in\Omega,~~|\tau|\leq m\}.$\\

In \cite{DKN}, the  corresponding weighted Sobolev space was  defined as
$$ W^{m,p}(\Omega,w)=\{ u \in L^{p}(\Omega), D^{\tau} u \in L^{p} (\Omega)~~  \forall ~~1\leq|\tau|\leq m-1 , D^{\tau} u   \in L^{p}(\Omega,w) ~~  \forall ~~|\tau|=m  \}$$
endowed with the following norm:

\begin{equation*}\label{eq:2.2}
\|u\|_{W^{m,p}(\Omega,w)}=\bigg(\sum_{ |\tau|\leq m-1}\int_{\Omega}|D^{\tau}u|^{p}dx+\displaystyle \sum_{ |\tau|= m}\int_{\Omega}|D^{\tau}u|^{p}\omega(x) dx\bigg)^{\frac{1}{p}},
\end{equation*}
where $w_{\tau}=1~~\mbox{for all}~~|\tau|< k,$ $w_{\tau}=\omega~~\mbox{for all}~~|\tau|=k$.\\

If we suppose also that $w(x)\in L^{1}_{loc}(\Omega)$, then $C^{\infty}_{0}(\Omega)$ is a subset of $W^{m,p}(\Omega,w)$ and we can introduce the space $$W^{m,p}_{0}(\Omega,w)$$
as the closure of $C^{\infty}_{0}(\Omega)$ in $W^{m,p}(\Omega,w).$\\
$(L^{p}(\Omega,w),\|\cdot\|_{p,w})$ and $(W^{m,p}(\Omega,w),\|\cdot\|_{W^{m,p}(\Omega,w)})$ are separable, reflexive Banach spaces provided that $w(x)^{\frac{-1}{p-1}} \in L^{1}_{loc}(\Omega)$.\\
 For $w(x)=1$, one finds the standard Sobolev spaces $W^{m,p}(\Omega)$, $W_{0}^{m,p}(\Omega)$ and the Lebesgue spaces  $L^{p}(\Omega)$.\\
Our space setting is $$\mathbf{W}=\{u\in W_{0,rad}^{2,2}(B,w)
~~|~~\displaystyle\int_{\Omega}|\triangle u|^{2}w(x)dx <\infty\}.$$    $\mathbf{W}$  is   equipped with norm
  $$\|u\|=\displaystyle\big(\int_{B}|\triangle u|^{2}w(x)dx\big)^{\frac{1}{2}},$$ which comes from the scalar product
$$<u,v>=\int_{B}\Delta u.\Delta v ~(\log\frac{e}{\vert x\vert})^{\beta}~dx.$$
We have the following result:
\begin{lemma}\label{lem31}~$(\mathbf{W}, \|.\|_{W_{0,rad}^{2,2}(B,w)})$ is a Banach space and the norm $\| .\|$  is equivalent in $\mathbf{W}$ to the norm $\|.\|_{W_{0,rad}^{2,2}(B,w)}$.
\end{lemma}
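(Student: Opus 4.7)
The plan is to reduce the lemma to two elementary facts: the weight $w$ is bounded below by $1$ on $B$, and on $W^{2,2}_0(B)$ the Laplacian already controls the lower-order derivatives. Given those, the equivalence of norms on the dense subspace $C^\infty_{0,rad}(B)$ extends to the closure and the Banach property is automatic from the completion.

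First I would record the pointwise bound on the weight. Since $|x|\le 1$, we have $\log(e/|x|)\ge 1$, and because $\beta\in(0,1)$ this gives $w(x)=(\log(e/|x|))^{\beta}\ge 1$ on $B$. Consequently, for every $u\in C^\infty_{0,rad}(B)$,
$$\int_B|\Delta u|^2\,dx \;\le\; \int_B|\Delta u|^2 w(x)\,dx \;=\; \|u\|^2,$$
so in particular $\|u\|\le \|u\|_{W^{2,2}_{0,rad}(B,w)}$. This handles one direction of the equivalence.

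For the reverse inequality I would use the standard Poincaré/bi-Laplacian estimates on $W^{2,2}_0(B)$. On one hand, the classical Poincaré inequality applied twice gives a constant $C_1>0$ with $\|u\|_{L^2(B)}\le C_1\|\Delta u\|_{L^2(B)}$ for every $u\in C^\infty_0(B)$ (one may also invoke $\lambda_1$ of the bi-Laplacian). On the other hand, integrating by parts and using Cauchy--Schwarz,
$$\int_B|\nabla u|^2\,dx \;=\; -\int_B u\,\Delta u\,dx \;\le\; \|u\|_{L^2(B)}\,\|\Delta u\|_{L^2(B)},$$
so $\|\nabla u\|_{L^2(B)}\le C_2\|\Delta u\|_{L^2(B)}$. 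Combining with the previous paragraph yields a constant $C>0$ such that $\|u\|_{W^{2,2}_{0,rad}(B,w)}\le C\|u\|$ for every $u\in C^\infty_{0,rad}(B)$, which is the substantive direction.

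It remains to pass to the closure. Since the two norms are equivalent on the dense subspace $C^\infty_{0,rad}(B)$, Cauchy sequences coincide, so the completion under either norm defines the same set $\mathbf{W}$; the resulting normed spaces are isomorphic Banach spaces, and in particular $(\mathbf{W},\|\cdot\|_{W^{2,2}_{0,rad}(B,w)})$ is complete. The only point that might look delicate, namely invoking an unweighted Poincaré inequality inside a weighted space, is harmless because the weight satisfies $w\ge 1$; so the anticipated main obstacle turns out to be purely cosmetic.
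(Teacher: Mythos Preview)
Your proof is correct and follows essentially the same route as the paper: both use $w\ge 1$ for one direction and Poincar\'e combined with integration by parts (the paper phrases this via Green's formula and Young's inequality rather than Cauchy--Schwarz) for the other. The only organizational difference is that you deduce completeness from the norm equivalence on the dense subspace, whereas the paper first runs a direct Cauchy-sequence argument for completeness and only afterward establishes the equivalence; your ordering is arguably cleaner.
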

\begin{proof}The Sobolev weighted space $(\mathbf{W},\|.\|_{W_{0,rad}^{2,2}(B,w)})$ is a normed linear space. In order to prove that it is a Banach space, let $\{u_{n}\}$ be a Cauchy sequence  that
 $$\|u_{n}-u_{m}\|_{\mathbf{W}_{0,rad}^{2,2}(B,w)}\rightarrow 0~~\mbox{as}~~n,m\rightarrow +\infty.$$
 Therefore
$\{u_{n}\}$ is also a Cauchy sequence in $(W_{0,rad}^{2,2}(B,w),\|.\|_{W_{0,rad}^{2,2}(B,w)})$.\\
 By the completeness of the last space, there exists $u\in W_{0,rad}^{2,2}(B,w)$ such that
 \begin{equation}\label{eq:2.1}
 \|u_{n}-u\|_{W_{0,rad}^{2,2}(B,w)}\rightarrow0~~\mbox{as}~~n\rightarrow +\infty.
 \end{equation}
Since  $\|u\|^{2}_{W_{0,rad}^{1,2}(B)}= \displaystyle\int_{B}|\nabla u|^{2}dx$, then $$\|u\|_{W_{0,rad}^{1,2}(B)}\leq \|u\|_{W_{0,rad}^{2,2}(B,w)},$$
 for all $u\in \mathbf{W}$, the sequence $\{u_{n}\}$ is also a Cauchy sequence in $(W^{1,2}_{0,rad}(B),\|.\|_{W^{1}(B)})$. By the completeness of $((W^{1,2}_{0,rad}(B),\|.\|_{W^{1,2}_{0,rad}(B)}$ there exists $v\in W^{1,2}_{0,rad}(B)$ such that
\begin{equation}\label{eq:2.2}\|u_{n}-v\|_{W^{1,2}_{0,rad}(B)}\rightarrow0~~\mbox{as}~~n\rightarrow+\infty
 \end{equation}
Since $u\in W^{2,2}_{0,rad}(B,w)$, $u\in W^{1,2}_{0,rad}(B)$ and by (\ref{eq:2.1}) we obtain
\begin{equation}\label{eq:2.3}
\|u_{n}-u\|_{W^{1,2}_{0,rad}(B)}\rightarrow0~~\mbox{as}~~n\rightarrow+\infty,
\end{equation}
and from (\ref{eq:2.2}),  (\ref{eq:2.3}),we have
 $$\|u-v\|_{W^{1,2}_{0,rad}(B)}\leq\|u_{n}-u\|_{W^{1,2}_{0,rad}(B)}+\|u_{n}-v\|_{W^{1,2}_{0,rad}(B)}\rightarrow 0~~\mbox{as}~~n\rightarrow+\infty,$$
 so $u=v~~\mbox{a .e in }~~B$ , $u\in \mathbf{W}$ and satisfies
 $$\|u_{n}-u\|_{W_{0,rad}^{2,2}(B,w)}\rightarrow 0~~\mbox{as}~~n\rightarrow+\infty.$$
Now we prove that $\|.\|$ is equivalent to $\|.\|_{W_{0,rad}^{2,2}(B,w)}$ in $\mathbf{W}$.
 $$
\| u \|^{2}_{W_{0}^{2,2}}= \| u \|^{2}_2 + \| \nabla u \|^{2}_2 + \displaystyle\int_{B}|\triangle u|^{2}w(x)dx.
 $$
For all $u \in W_{0,rad}^{2,2}(B)$, we have
 $$
\| u \|^{2}  = \displaystyle\int_{B}|\triangle u|^{2}w(x)dx\leq \| u \|_2 + \| \nabla u \|^{2}_2 +\displaystyle\int_{B}|\triangle u|^{2}w(x)dx
 $$
On the other hand, for all $u \in W_{0,rad}^{2,2}(B,w)$, by
 Poincar\'{e} inequality,
 $$
\|u\|^{2}_2 \leq C \|\nabla u\|^{2}_2,
 $$
and using the Green formula we get
$$
 \displaystyle \int_B \nabla u \nabla u  = - \displaystyle \int_B u \Delta
 u + \underbrace{\displaystyle\int_{\del B} u \frac{\partial u}{\partial n}}_{= 0} ~
 \leq  \displaystyle \Big| \int_B u \Delta u \Big|,
 $$
By  Young inequality, we get for all $\varepsilon>0$
 $$
 \displaystyle \Big| \int_B u \Delta u \Big| \leq
  \displaystyle \frac{1}{2 \varepsilon} \displaystyle \int_B |\Delta u|^2 + \displaystyle \frac{\varepsilon}{2} \displaystyle
 \int_B |u|^2,
 $$
 Again, by the  Poincarr\'{e} inequality and using the fact that $w(x)\geq 1,~~\mbox{for all}~~x\in B,$ we get
 $$
  \displaystyle \int_B \nabla u \nabla udx \leq   \displaystyle \frac{1}{2\varepsilon} \displaystyle\int_B |\Delta u |^2 dx
  + \displaystyle \frac{\varepsilon}{2}
  C^2 \displaystyle\int_B |\nabla u |^2dx \leq   \displaystyle \frac{1}{2\varepsilon} \displaystyle\int_B |\Delta u |^2 w(x)dx
  + \displaystyle \frac{\varepsilon}{2}
  C^2 \displaystyle\int_B |\nabla u |^2 dx.
 $$
Hence
 $$
 (1 - \displaystyle \frac{\varepsilon}{2} C^2) \displaystyle \int_B |\nabla u |^2dx ~ \leq ~
 \displaystyle \frac{1}{2\varepsilon} \displaystyle\int_ B |\Delta u |^2 w(x)dx,
  $$
wich implies that
\begin{equation}\label{nabla-Delta}
 \|\nabla u\|^{2}_2 \leq C \displaystyle\int_ B |\Delta u |^2 w(x)dx
\end{equation}
and it is easy to conclude.
\end{proof}
\subsection{The pass mountain geometry}
Since the nonlinearity $f(x,t)$ is critical at $+\infty$, there exist $ a,~ C>0$ positive constants and there exists $t_{1} >1$ such for that
\begin{equation}\label {eq:2.5}
|f(x,t)|\leq C e^{a ~t^{\gamma}}, ~~~~~~\forall |t| >t_{1}.
\end{equation}
So, the functional $\mathcal{J}$ given by (\ref{eq:1.6}) is well defined and of class $C^{1}$ .
\\
 In order to prove the existence of nontrivial solution to the problem (\ref{eq:1.1.1}), we will prove the existence of nonzero critical point of the functional $\mathcal{J}$ by using the following theorem which is introduced by Ambrosetti and Rabionowitz in \cite{AR} (Mountain Pass Theorem).
 \begin{definition}\label{def4.1}~~Let $(u_{n})$ be a sequence in a Banach space $E$ and $J\in C^{1}(E,\R)$ and let $c\in \R$. We say that the sequence $(u_{n})$ is a Palais-Smale sequence at level $c$ ( or  $(PS)_{c}$ sequence ) for the functional $J$ if
  $$J(u_{n})\rightarrow c~~\mbox{in}~~\R, ~~\mbox{as}~~ n\rightarrow+\infty$$
  and
  $$J'(u_{n})\rightarrow 0~~\mbox{in}~~E', ~~\mbox{as}~~ n\rightarrow+\infty.$$
We say that the functional $J$ satisfies the
Palais-Smale condition $(PS)_{c}$ at the level $c$ if
 every $(PS)_{c}$ sequence $(u_{n})$  is relatively compact in $E$.
  \end{definition}
\begin{theorem}\label{th2.11} \cite{AR}
Let $E$ be a Banach space and  $J: E \rightarrow \mathbb{R}$  a $C^{1}$ functional satisfying $J(0)=0$. Suppose that
  \begin{enumerate}
    \item [(i)] There exist $ \rho ,~~ \beta >0$ such that $\forall u \in \partial B(0,\rho), J(u) \geq \beta$;
    \item [(ii)] There exists $ x_{1} \in E$ such that $ \| x_{1} \|>\rho ~~\text{and }~~J(x_{1})<0$;
 \item [(iii)] $J$ satisfies the Palais-Smale condition $(PS)$,  that is for all sequence $(u_{n}$ in $E$ satisfying
 \begin{equation}\label{eq:2.6}
 J(u_{n}) \rightarrow d~~~~\mbox{as}~~n\rightarrow +\infty
  \end{equation}
  for some $d\in \R$ and
 \begin{equation}\label{eq:2.7}
\|J'(u_{n})\|_{\ast}~\rightarrow 0 ~~~~\mbox{as}~~n\rightarrow +\infty,
\end{equation}
the sequence  $(u_{n})$ is relatively compact.
 \end{enumerate}
    Then, $J$ has a critical point $u$ and the critical value  $c=J(u)$ verifies
    $$c:=\displaystyle{\inf_{\gamma\in \Gamma}}\displaystyle{\max_{t\in [0,1] }} J(\gamma (t))$$
    where $\Gamma :=\{\gamma\in C([0,1],X)~~\mbox{such that}~~\gamma(0)=0 ~~\mbox{ and }~~\gamma(1)=x_{1}\}$
and $c \geq \beta$.\\
  \end{theorem}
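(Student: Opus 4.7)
The plan is to prove Theorem \ref{th2.11} via the standard deformation-lemma approach. First I would establish that the minimax level
$$c := \inf_{\gamma \in \Gamma} \max_{t \in [0,1]} J(\gamma(t))$$
satisfies $c \geq \beta$. This is a purely topological observation: any $\gamma \in \Gamma$ is a continuous path from $0$ to $x_1$ with $\|0\| = 0 < \rho < \|x_1\|$, so by the intermediate value theorem applied to $t \mapsto \|\gamma(t)\|$ there is $t^\ast \in (0,1)$ with $\gamma(t^\ast) \in \partial B(0,\rho)$, whence $\max_t J(\gamma(t)) \geq J(\gamma(t^\ast)) \geq \beta$ by hypothesis (i). In particular $c \geq \beta > 0 > J(x_1)$ and $c > J(0) = 0$, so $0$ and $x_1$ lie strictly below the level $c$.

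The core is to show that $c$ is a critical value. I would argue by contradiction, assuming $c$ is regular. The key tool is the quantitative deformation lemma: if $c$ is a regular value of a $C^1$ functional satisfying $(PS)_c$, then for every sufficiently small $\bar\varepsilon > 0$ there exist $\varepsilon \in (0,\bar\varepsilon)$ and a homeomorphism $\eta : E \to E$ such that (a) $\eta(u) = u$ whenever $|J(u) - c| \geq \bar\varepsilon$, and (b) $\eta(J^{c+\varepsilon}) \subset J^{c-\varepsilon}$, where $J^a = \{u \in E : J(u) \leq a\}$. I would choose $\bar\varepsilon$ small enough that $\bar\varepsilon < \beta/2$ and $\bar\varepsilon < -J(x_1)/2$, so that both endpoints $0$ and $x_1$ are outside the deformation zone.

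Building $\eta$ proceeds in three standard steps. Step one: use $(PS)_c$ to obtain a uniform lower bound $\|J'(u)\|_\ast \geq \delta > 0$ on the annulus $A := J^{-1}[c - 2\bar\varepsilon, c + 2\bar\varepsilon]$. Indeed, failure of this bound would yield a sequence $(u_n) \subset A$ with $\|J'(u_n)\|_\ast \to 0$ and, after passing to a convergent subsequence supplied by $(PS)_c$, a critical point at level $c$, contradicting the assumption. Step two: construct a locally Lipschitz pseudo-gradient $V$ on the regular set satisfying $\|V(u)\| \leq 2\|J'(u)\|_\ast$ and $\langle J'(u), V(u) \rangle \geq \|J'(u)\|_\ast^2$, using paracompactness of $E$ and a partition-of-unity argument. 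Step three: cut $V$ off to zero outside $A$ via a Urysohn-type locally Lipschitz function, solve the Cauchy problem $\dot\sigma = -V(\sigma)$, $\sigma(0) = u$, and set $\eta(u) := \sigma(u, T)$ for a time $T$ proportional to $\bar\varepsilon/\delta^2$ large enough that condition (b) holds.

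Once $\eta$ is in hand, I would pick a near-optimal path $\gamma_0 \in \Gamma$ with $\max_t J(\gamma_0(t)) \leq c + \varepsilon$. Property (a) and the choice of $\bar\varepsilon$ force $\eta(0) = 0$ and $\eta(x_1) = x_1$, so $\eta \circ \gamma_0 \in \Gamma$; property (b) then yields $\max_t J(\eta \circ \gamma_0(t)) \leq c - \varepsilon$, contradicting the definition of $c$. Hence $c$ must be a critical value, producing the desired critical point $u$ with $J'(u) = 0$ and $J(u) = c \geq \beta > 0$, so $u \neq 0$. The main technical obstacle is the construction and flow of the pseudo-gradient in step three: the $C^1$ regularity of $J$ does not give a Lipschitz gradient, so one cannot solve the gradient flow directly, and all the delicate bookkeeping (time of flow, local finiteness of the partition of unity, invariance of $A$ under the truncated flow) must be arranged simultaneously. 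Once this machinery is set up, the contradiction closing the proof is clean.
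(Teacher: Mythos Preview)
Your proposal is correct and follows the classical Ambrosetti--Rabinowitz deformation-lemma argument. Note, however, that the paper does not actually prove this theorem: it is stated with a citation to \cite{AR} and used as a black box, so there is no ``paper's own proof'' to compare against---your write-up is essentially the original proof from the cited reference.
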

Before starting the proof of the geometric properties for the function $\mathcal{J}$, we recall the following  radial Lemma  introduced  in \cite{WZ}.
\begin{lemma}\cite{WZ}\label{lem2.1} Let $u$ be a radially symmetric
 function in $C_{0}^{2}(B)$. Then, we have
 $$|u(x)|\leq \displaystyle\frac{1}{2\sqrt{2}\pi}\bigg(\frac{~~|\log(\frac{e}{|x|})|^{1-\beta}-1}{1-\beta}\bigg)^{\frac{1}{2}}\|u\|\cdot$$

\end{lemma}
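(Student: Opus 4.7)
The strategy is to reduce the pointwise estimate to a one-dimensional weighted Hardy-type inequality via a logarithmic change of variable.

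First, exploiting the radial symmetry, I write $u(x) = u(r)$ with $r = |x|$, so the radial Laplacian reads $\Delta u = u''(r) + (3/r) u'(r)$. Since $u \in C_0^2(B)$ is radial and vanishes with its normal derivative on $\partial B$, we have $u(1) = u'(1) = 0$, and $u \equiv 0$ near $\partial B$. I then apply the substitution $t = \log(e/r)$, so $r = e^{1-t}$ and $t \in [1,\infty)$, and set $v(t) = u(e^{1-t})$. A direct computation shows
\begin{equation*}
r^{2}\Delta u = v''(t) - 2 v'(t), \qquad v'(t) = -r\, u'(r),
\end{equation*}
and, inserting the Jacobian $r^{3}\,dr = -r^{4}\,dt$ together with the area $\omega_{3} = 2\pi^{2}$ of $S^{3}$, the weighted norm transforms into
\begin{equation*}
\|u\|^{2} = 2\pi^{2} \int_{1}^{\infty} \bigl(v''(t) - 2 v'(t)\bigr)^{2} t^{\beta}\, dt.
\end{equation*}
The boundary data become $v(1) = 0$ and $v'(1) = 0$, and the identity $v'(t)^{2} = r^{2} u'(r)^{2}$ makes $v'(t)^{2} t^{\beta} \to 0$ at both endpoints.

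The analytic core is the weighted Hardy-type inequality
\begin{equation*}
\int_{1}^{\infty} (v'(t))^{2}\, t^{\beta}\, dt \;\leq\; \frac{1}{4} \int_{1}^{\infty} \bigl(v''(t) - 2 v'(t)\bigr)^{2} t^{\beta}\, dt.
\end{equation*}
To prove it, expand $(v'' - 2v')^{2} = (v'')^{2} - 4 v' v'' + 4(v')^{2}$, note $2 v' v'' = ((v')^{2})'$, and integrate by parts on the cross term. The boundary contributions vanish by the conditions above, leaving $-4\int v' v'' t^{\beta}\, dt = 2\beta \int (v')^{2} t^{\beta-1}\, dt \geq 0$. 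Dropping this nonnegative term and the nonnegative integral $\int (v'')^{2} t^{\beta}\, dt$ yields the claim.

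With this in hand, since $v(1) = 0$ I write $v(t) = \int_{1}^{t} v'(s)\, ds$ and split $v'(s) = (v'(s) s^{\beta/2})\cdot s^{-\beta/2}$ before applying Cauchy--Schwarz:
\begin{equation*}
|v(t)|^{2} \;\leq\; \Bigl(\int_{1}^{t} (v'(s))^{2} s^{\beta}\, ds\Bigr)\Bigl(\int_{1}^{t} s^{-\beta}\, ds\Bigr) \;\leq\; \frac{1}{4}\cdot \frac{\|u\|^{2}}{2\pi^{2}} \cdot \frac{t^{1-\beta}-1}{1-\beta}.
\end{equation*}
Taking square roots and substituting back $t = \log(e/|x|)$, $v(t) = u(x)$, I obtain the stated estimate with constant $\sqrt{1/(8\pi^{2})} = 1/(2\sqrt{2}\pi)$; an elementary density argument then extends it from $C_{0,rad}^{2}(B)$ to $\mathbf{W}$.

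The main obstacle is the Hardy inequality: the cancellation of boundary terms in the integration by parts on the cross product is what produces the sharp constant $1/4$, and it is the only step that genuinely uses both endpoint conditions on $v$ and $v'$. Once that is secured, the rest is bookkeeping of the factor $\omega_{3} = 2\pi^{2}$ and of the elementary integral $\int_{1}^{t} s^{-\beta}\,ds = (t^{1-\beta}-1)/(1-\beta)$.
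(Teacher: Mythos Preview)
The paper does not prove this lemma; it is stated with a citation to Wang--Zhu \cite{WZ} and used as a black box. Your proof is correct and self-contained: the logarithmic change of variable $t=\log(e/r)$ turning $r^{2}\Delta u$ into $v''-2v'$, the expansion-and-integration-by-parts argument yielding
\[
\int_{1}^{\infty}(v')^{2}t^{\beta}\,dt \le \tfrac14\int_{1}^{\infty}(v''-2v')^{2}t^{\beta}\,dt,
\]
and the Cauchy--Schwarz splitting $v'(s)=v'(s)s^{\beta/2}\cdot s^{-\beta/2}$ are exactly the standard steps in the Wang--Zhu argument, so your approach coincides with the source the paper is citing. One small imprecision: you write that the integration by parts ``is the only step that genuinely uses both endpoint conditions on $v$ and $v'$'', but in fact $v(1)=0$ is used only in the fundamental-theorem-of-calculus representation $v(t)=\int_{1}^{t}v'$, while the integration by parts uses $v'(1)=0$ together with the decay $v'(t)^{2}t^{\beta}\to 0$ at infinity; the two conditions enter at different places.
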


Since $ w(x)\geq 1,~~\mbox{for all}~~x\in B$, then the following embedding $W_{0}^{2,2}(B,w)\hookrightarrow W_{0}^{2,2}(B)\hookrightarrow L^{q}(B)$ are continuous and also compact for all $q \geq 2$. So there exists a constant $C>0$ such that $\|u\|_{2q}\leq c \|u\|$, for all $u\in \mathbf{W}$.\\
In the next Lemma, we prove that the $\mathcal{J}$ satisfies the first geometric property.
\begin{lemma}\label{lem2.2}
Suppose that $(H_{1})~~\mbox{and}~~(H_{4})$ hold. Then, there exist  $\rho,~\beta_{0}>0$  such that $\mathcal{J}(u)\geq \beta_{0}$ for all
 $u\in \mathbf{W}$ with $\|u\|=\rho$.
 \end{lemma}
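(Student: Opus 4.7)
The plan is to combine the behavior of $F$ near zero from $(H_4)$ with the critical growth bound for large $t$, and then use Adams' inequality (Theorem 1.1) together with Hölder's inequality to control the resulting exponential term.

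First, I would translate $(H_4)$ into a useful pointwise estimate. By $(H_4)$ there exist $\tau \in (0,1)$ and $\delta>0$ such that $F(x,t)\le \tfrac{(1-\tau)\lambda_1}{2}t^2$ for all $|t|\le\delta$. For $|t|>\delta$, the critical growth condition \eqref{eq:1.5} together with $(H_1)$ gives, for any fixed $\alpha>\alpha_0$ and any exponent $q>2$, a constant $C=C(\alpha,q,\delta)>0$ with
\begin{equation*}
F(x,t)\le C\,|t|^{q}\,e^{\alpha |t|^{\gamma}}, \qquad \forall\, |t|>\delta,\ \forall\, x\in B.
\end{equation*}
Combining the two regimes yields the global bound
\begin{equation*}
F(x,t)\le \tfrac{(1-\tau)\lambda_1}{2}\,t^{2}+C\,|t|^{q}\,e^{\alpha |t|^{\gamma}},\qquad \forall (x,t)\in B\times\R.
\end{equation*}

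Next I would plug this into $\mathcal{J}$. Using the variational characterization of $\lambda_1$, namely $\|u\|_2^2\le \lambda_1^{-1}\|u\|^2$, I get
\begin{equation*}
\mathcal{J}(u)\ge \tfrac12\|u\|^{2}-\tfrac{(1-\tau)}{2}\|u\|^{2}-C\int_{B}|u|^{q}e^{\alpha|u|^{\gamma}}dx
=\tfrac{\tau}{2}\|u\|^{2}-C\int_{B}|u|^{q}e^{\alpha|u|^{\gamma}}dx.
\end{equation*}
The remaining task is to show the exponential integral is $O(\|u\|^{q})$ when $\|u\|$ is small. I would apply Hölder's inequality with conjugate exponents $p,p'>1$ (with $p$ chosen close to $1$):
\begin{equation*}
\int_{B}|u|^{q}e^{\alpha |u|^{\gamma}}dx
\le \Bigl(\int_{B}|u|^{qp'}dx\Bigr)^{1/p'}\Bigl(\int_{B}e^{p\alpha |u|^{\gamma}}dx\Bigr)^{1/p}.
\end{equation*}
For the second factor, writing $u=\|u\|\,v$ with $\|v\|=1$ gives $p\alpha|u|^{\gamma}=p\alpha\|u\|^{\gamma}|v|^{\gamma}$, so requiring $p\alpha\|u\|^{\gamma}\le \alpha_\beta$ brings us inside the range of Adams' inequality \eqref{eq:1.4} and the second factor is bounded by a universal constant. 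For the first factor, the continuous embedding $\mathbf{W}\hookrightarrow L^{qp'}(B)$ recalled just before the lemma yields $\|u\|_{qp'}\le C\|u\|$, hence the integral is bounded by $C\|u\|^{q}$.

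Putting everything together,
\begin{equation*}
\mathcal{J}(u)\ge \tfrac{\tau}{2}\|u\|^{2}-C'\|u\|^{q},
\end{equation*}
and since $q>2$, choosing $\rho>0$ small enough that both $\rho\le (\alpha_\beta/(p\alpha))^{1/\gamma}$ and $\tfrac{\tau}{2}\rho^{2-q}>2C'$ produces $\beta_0:=\tfrac{\tau}{4}\rho^{2}>0$ with $\mathcal{J}(u)\ge \beta_0$ whenever $\|u\|=\rho$. The main technical point — and the only place that needs care — is choosing $\alpha>\alpha_0$, $p>1$ and $\rho$ in a compatible way so that Adams' inequality can actually be invoked; this is possible precisely because the growth is subcritical below the threshold $\alpha_\beta$ and because we have the freedom to take $p$ arbitrarily close to $1$.
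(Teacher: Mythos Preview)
Your proof is correct and follows essentially the same route as the paper: split the estimate on $F$ into a quadratic part from $(H_4)$ and an exponential part from the critical growth, use the definition of $\lambda_1$ to absorb the quadratic term, and control the exponential term via H\"older plus Adams' inequality \eqref{eq:1.4} on a small sphere. The only cosmetic differences are that the paper fixes the H\"older exponents as $(2,2)$ rather than your general $(p,p')$, and the paper invokes $(H_3)$ to pass from $f$ to $F$ for large $t$, whereas you obtain the same large-$t$ bound directly from $(H_1)$ and the critical growth condition, which is slightly cleaner since $(H_3)$ is not among the stated hypotheses.
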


 \begin{proof}
 It follows from the hypothesis $(H_{4})$ that  there exists $t_{0}>0$ and there exists  $\varepsilon \in (0,1)$  such that
\begin{equation}\label {eq:2.8}
F(x,t)\leq \frac{1}{2}\lambda_{1}(1-\varepsilon_{0})|t|^{2},~~~~~~\mbox{for}~~|t|< t_{2}.
\end{equation}
Indeed,  $$\displaystyle\limsup_{t\rightarrow 0}\frac{2 F(x,t)}{t^{2}}< \lambda_{1}$$
or $$\displaystyle\inf_{\tau>0}\sup\{\frac{2F(x,t)}{t^{2}},~~0<t<\tau\}< \lambda_{1}$$
Since this inequality is strict, then there exists $\varepsilon_{0}>0$ such that
$$\displaystyle\inf_{\tau>0}\sup\{\frac{2F(x,t)}{t^{2}},~~0<t<\tau\}< \lambda_{1}-\varepsilon_{0},$$
hence, there exists $t_{2}>0$ such that
$$\sup\{\frac{2F(x,t)}{t^{N}},~~0<t<t_{2}\}< \lambda_{1}-\varepsilon_{0}.$$
Hence$$\forall |t|<t_{2}~~~F(x,t)\leq \frac{1}{2}\lambda_{1}(1-\varepsilon_{0})t^{2}.$$

From $(H_{3})$ and (\ref{eq:2.5}) and for all $q>2$, there exist a  constant $C>0$ such that
\begin{equation}\label{eq:2.9}
F(x,t)\leq  C |t|^{q} e^{a~t^{\gamma}},~~\forall~|t| > t_{1}.
\end{equation}
So
\begin{equation}\label {eq:2.10}
F(x,t)\leq \frac{1}{2}\lambda_{1}(1-\varepsilon_{0})|t|^{2}+C |t|^{q}e^{a~t^{\gamma}},~~~~~~\mbox{for all}~~t\in \R.
\end{equation}
Since
$$\mathcal{J}(u)=\frac{1}{2}\|u\|^{2}-\int_{B}F(x,u)dx,$$
we get
$$\mathcal{J}(u)\geq \frac{1}{2}\|u\|^{2}- \frac{1}{2}\lambda_{1}(1-\varepsilon_{0})\|u\|^{2}-C\int_{B} |u|^{q}e^{a~u^{\gamma}}~dx.$$
But $\lambda_{1}\|u\|_{2}^{2}\leq \|u\|^{2}$ and from the  H\"{o}lder inequality, we obtain
\begin{equation}\label {eq:2.11}
\mathcal{J}(u)\geq \frac{ \varepsilon_{0}}{2}\|u\|^{2}-C  (\int_{B}e^{2a~|u|^{\gamma}}dx\Big)^{\frac{1}{2}}\|u\|^{q}_{2q}\cdot
\end{equation}
From the Theorem \ref{th1.1}, if we choose $u\in \mathbf{W}$ such that
\begin{equation}\label {eq:2.12}
2a \|u\|^{\gamma}\leq \alpha_{\beta},
\end{equation}
we get
$$\int_{B}e^{2a|u|^{\gamma}}dx=\int_{B}e^{2a\|u\|^{\gamma}(\frac{|u|}{\|u\|})^{\gamma}}dx<+\infty.$$
On the other hand $\|u\|_{2q}\leq C \|u\|$,  so
$$\mathcal{J}(u)\geq \frac{\varepsilon_{0}}{2}\|u\|^{2}-C\|u\|^{q},$$
for all $u\in \mathbf{W}$ satisfying (\ref{eq:2.12}).
Since $2<q$, we can choose $\rho=\|u\|>0$ as the maximum point of the function $g(\sigma)=\frac{\varepsilon_{0}}{2} \sigma^{2}-C\sigma^{q}$ on the interval $[0,(\frac{\alpha_{\beta}}{2a})^{\frac{1}{\gamma}}]$ and $\beta_{0}=g(\rho)$ , $\mathcal{J}(u) \geq\beta_{0}>0$.
\end{proof}

By the following Lemma, we prove the second geometric property for the functional $\mathcal{J}$.
\begin{lemma}\label{lem2.4}
Suppose that $(H_{1})$ and $(H_{2})$ hold. Let $\varphi_{1}$ be a normalized eigenfunction associated to $\lambda_{1}$ in $\mathbf{W}$. Then, $\mathcal{J}(t\varphi_{1})\rightarrow -\infty,~~\mbox{as}~~t\rightarrow+\infty$.
 \end{lemma}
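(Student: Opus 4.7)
The strategy is to use $(H_2)$ to extract an exponential lower bound on the primitive $F(x,t)$, and then to combine this with the strict positivity of the first eigenfunction $\varphi_1$ to show that the nonlinear part of $\mathcal{J}(t\varphi_1)$ overwhelms the quadratic part $\tfrac{t^2}{2}\|\varphi_1\|^2 = \tfrac{t^2}{2}$. No Adams-type inequality is needed for this direction; the difficulty is concentrated in an ODE-style argument.

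\textbf{Step 1 (exponential lower bound on $F$).} I interpret $(H_2)$ as the pointwise differential inequality
\begin{equation*}
\partial_t F(x,t) = f(x,t) \ge \frac{1}{M}\,F(x,t),\qquad t > t_0,
\end{equation*}
valid since $F(x,t) > 0$ on that range. Dividing by $F$ and integrating from $t_0 + 1$ to $t$ yields
\begin{equation*}
F(x,t) \ge F(x, t_0 + 1)\,\exp\!\left(\frac{t - t_0 - 1}{M}\right),\qquad t \ge t_0 + 1.
\end{equation*}
By $(H_1)$, $f(x,s) > 0$ for $s > 0$, hence $F(\cdot, t_0+1)$ is continuous and strictly positive on $B$.

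\textbf{Step 2 (dominating integral).} By the properties of $\lambda_1$ recalled in the introduction, $\varphi_1$ is a positive bounded radial eigenfunction in $\mathbf{W}$; by radial symmetry and standard elliptic regularity it attains its maximum at the origin, so there exist a closed ball $K \subset B$ of positive measure and $\delta > 0$ with $\varphi_1 \ge \delta$ on $K$. On the same $K$, continuity gives $F(x, t_0+1) \ge c_0 > 0$. Since $F \ge 0$ everywhere by $(H_1)$, I restrict the integral to $K$: for $t$ large enough that $t\delta \ge t_0 + 1$,
\begin{equation*}
\int_B F(x, t\varphi_1)\,dx \;\ge\; c_0\,|K|\,\exp\!\left(\frac{t\delta - t_0 - 1}{M}\right).
\end{equation*}

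\textbf{Step 3 (conclusion and main obstacle).} Combining the bound above with the normalization $\|\varphi_1\|=1$, I obtain
\begin{equation*}
\mathcal{J}(t\varphi_1) \;\le\; \frac{t^2}{2} - c_0\,|K|\,\exp\!\left(\frac{t\delta - t_0 - 1}{M}\right) \;\xrightarrow[t\to+\infty]{}\; -\infty,
\end{equation*}
as the exponential term dominates the quadratic. The only mildly delicate point is securing uniform positivity of $F(\cdot,t_0+1)$ and of $\varphi_1$ on a common set of positive measure; this is what the compactness of $K$ and the standard fact that the simple first eigenfunction does not vanish in the interior are used for. Everything else reduces to the elementary integration of the Gronwall-type inequality in Step 1.
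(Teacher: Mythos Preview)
Your proof is correct and rests on the same key step as the paper's: integrating the differential inequality $f=\partial_t F \ge F/M$ from $(H_2)$ to obtain an exponential lower bound $F(x,t)\ge C\,e^{t/M}$ for large $t$. The only difference is in the finish---the paper weakens this to a quadratic bound $F(x,t)\ge \tfrac{b}{2}t^2+C$ with $b>\lambda_1$ and then invokes the eigenvalue relation $\|\varphi_1\|^2=\lambda_1\|\varphi_1\|_2^2$, whereas you keep the exponential and localize to a set where $\varphi_1\ge\delta$; both conclusions are immediate once the exponential bound is secured, and your route sidesteps the uniformity-in-$x$ issue that the paper's quadratic bound tacitly assumes.
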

\begin{proof}
It follows from the condition $(H_{2})$ that
$$f(x,t)=\frac{\partial}{\partial t}F(x,t)\geq \frac{1}{M}F(x,t),$$
for all $t\geq t_{0}$. So
\begin{equation*}\label {eq:2.13}
F(x,t)\geq C~ e^{\frac{t}{M}},~~\forall ~~t\geq t_{0}.
\end{equation*}
It follows that, there exist $b>\lambda_{1}$ and $C>0$ such that $F(x,t)\geq \frac{b}{2} t^{2} + C$ for all $t> 0$.
$$\mathcal{J}(t\varphi_{1})\leq \frac{t^{2}}{2}\|\varphi_{1}\|^{2}- \frac{b}{2}t^{2}\|\varphi_{1}\|_{2}^{2}-C|B|,$$
where $|B|=meas (B)=Vol(B)$.
Then, from the definition of $\lambda_{1}$, we get
$$\mathcal{J}(t\varphi_{1})\leq t^{2}\frac{\lambda_{1}-b}{2}\|\varphi_{1}\|_{2}^{2}<0~~\forall t>0.$$
So, the Lemma \ref{lem2.4} follows.
\end{proof}
\section{ The compactness analysis }

\subsection{Concentration Compactness Theorem}
In order to prove that the functional $\mathcal{J}$ satisfies the $(PS)$ condition, we need a lions type result \cite{L} about an improved Adam's inequality.
\begin{theorem}\label{lem3.1}  Let
$(u_{k})_{k}$  be a sequence in $\mathbf{W}$. Suppose that, \newline $\|u_{k}\|=1$, $u_{k}\rightharpoonup u$ weakly in $\mathbf{W}$, $u_{k}(x)\rightarrow u(x) ~~a.e~x\in B$,  and $u\not\equiv 0$. Then
$$\displaystyle\sup_{k}\int_{B}e^{p~\alpha_{\beta}
|u_{k}|^{\gamma}}dx< +\infty,~~\mbox{where}~~ \alpha_{\beta}=4[8\pi^{2}(1-\beta)]^{\frac{1}{1-\beta}},$$
for all $1<p<U(u)$ where $U(u)$ is given by:
 $$U(u):=\displaystyle \left\{
      \begin{array}{rcll}
&\displaystyle\frac{1}{(1-\|u\|^{2})^{\frac{\gamma}{2}}}& \mbox{ if }\|u\| <1\\
       &+\infty& \mbox{ if } \|u\|=1\\
 \end{array}
    \right.$$
\end{theorem}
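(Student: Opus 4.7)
My plan is to follow the Lions concentration-compactness strategy adapted to the weighted Adams setting of Theorem \ref{th1.1}. I would split the analysis into the trivial case $\|u\|=1$ and the nontrivial case $\|u\|<1$.

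First, suppose $\|u\|=1$. Since $\mathbf{W}$ is a Hilbert space and $u_k \rightharpoonup u$ weakly with $\|u_k\|\to\|u\|$, one gets strong convergence $u_k\to u$ in $\mathbf{W}$. Then for any $p\geq 1$, the sequence $p\alpha_\beta |u_k|^\gamma$ is controlled uniformly in $\|\cdot\|\leq 1$ norm (after rescaling), and inequality (\ref{eq:1.4}) yields the claim directly.

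For the main case $\|u\|<1$, I would first establish the Br\'{e}zis--Lieb-type identity
$$\|u_k-u\|^{2}=\|u_k\|^{2}-\|u\|^{2}+o(1)=1-\|u\|^{2}+o(1),$$
which follows from expanding the inner product and using $\langle u_k,u\rangle\to \|u\|^{2}$ by weak convergence. Set $\sigma^{2}:=1-\|u\|^{2}$. Next I would use the elementary inequality: for every $\varepsilon>0$ there exists $C_\varepsilon>0$ such that
$$(a+b)^{\gamma}\leq (1+\varepsilon)\,a^{\gamma}+C_\varepsilon\,b^{\gamma},\qquad a,b\geq 0,$$
applied with $a=|u_k-u|$ and $b=|u|$, which gives
$$|u_k|^{\gamma}\leq (1+\varepsilon)|u_k-u|^{\gamma}+C_\varepsilon |u|^{\gamma}.$$
Multiplying by $p\alpha_\beta$ and taking exponentials, then applying H\"older's inequality with conjugate exponents $q,q'>1$, one obtains
$$\int_{B}e^{p\alpha_\beta|u_k|^{\gamma}}dx\leq \Big(\int_{B}e^{qp\alpha_\beta(1+\varepsilon)|u_k-u|^{\gamma}}dx\Big)^{1/q}\Big(\int_{B}e^{q'p\alpha_\beta C_\varepsilon|u|^{\gamma}}dx\Big)^{1/q'}.$$
The second factor is finite (and independent of $k$) by Theorem \ref{th1.1} applied to the fixed function $u/\|u\|$.

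For the first factor, I would write $v_k:=(u_k-u)/\|u_k-u\|$, so that $\|v_k\|=1$ and
$$qp\alpha_\beta(1+\varepsilon)|u_k-u|^{\gamma}=qp\alpha_\beta(1+\varepsilon)\|u_k-u\|^{\gamma}|v_k|^{\gamma}.$$
Since $\|u_k-u\|^{\gamma}\to \sigma^{\gamma}=(1-\|u\|^{2})^{\gamma/2}=1/U(u)$, the crucial task is to choose $q>1$ close to $1$ and $\varepsilon>0$ small so that
$$qp(1+\varepsilon)(1-\|u\|^{2})^{\gamma/2}<1,\quad\text{equivalently}\quad qp(1+\varepsilon)<U(u).$$
This is possible precisely because we have assumed $p<U(u)$ (with strict inequality). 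For such a choice and for all $k$ large enough, $qp\alpha_\beta(1+\varepsilon)\|u_k-u\|^{\gamma}\leq \alpha_\beta$, and then Theorem \ref{th1.1} applied to $v_k$ yields a uniform bound on the first factor. Combining the two bounds concludes the proof.

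The main obstacle is the careful tuning of the three parameters $(q,\varepsilon,k)$ to ensure that the supercritical part $qp\alpha_\beta(1+\varepsilon)\|u_k-u\|^{\gamma}$ stays below the Adams threshold $\alpha_\beta$; this is the point where the definition of $U(u)$ enters decisively, since the strict inequality $p<U(u)$ leaves exactly enough room to absorb both the H\"older loss factor $q$ and the splitting loss $(1+\varepsilon)$. The Br\'{e}zis--Lieb identity in the weighted Hilbert norm is routine but must be stated explicitly, and the a.e.\ convergence hypothesis is what guarantees one can pass to the pointwise limit where needed.
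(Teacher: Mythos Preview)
Your approach is essentially correct and takes a genuinely different route from the paper. The paper argues by contradiction via level-set truncations $T^{\mathcal{L}}(u)=\min\{\mathcal{L},u\}$ and $T_{\mathcal{L}}(u)=u-T^{\mathcal{L}}(u)$: assuming the supremum is infinite, it localizes the blow-up to the superlevel set $\{u_k\geq\mathcal{L}\}$, bounds $\|T_{\mathcal{L}}u_k\|$ from below, and derives a contradiction from weak lower semicontinuity applied to $T^{\mathcal{L}}u_k$. Your argument instead uses the Hilbert-space identity $\|u_k-u\|^2=1-\|u\|^2+o(1)$, the convexity splitting $|u_k|^\gamma\leq(1+\varepsilon)|u_k-u|^\gamma+C_\varepsilon|u|^\gamma$, and H\"older. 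This is the classical Lions strategy; it is more direct, and it sidesteps the delicate question of whether $T^{\mathcal{L}}u_k$ actually belongs to the second-order space $\mathbf{W}$ (truncation preserves $W^{1,p}$ but not $W^{2,p}$ in general, a point the paper's proof leaves unaddressed).

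Two points need tightening. First, your case $\|u\|=1$ is too quick: strong convergence $u_k\to u$ does follow, but you cannot then invoke (\ref{eq:1.4}) ``directly,'' since $p\alpha_\beta>\alpha_\beta$ lies above the sharp threshold and the uniform bound there fails. The fix is simply to rerun your main-case splitting with $\|u_k-u\|\to 0$: for any fixed $p,q,\varepsilon$ one has $qp(1+\varepsilon)\|u_k-u\|^\gamma<1$ for all large $k$, and Adams applies to $(u_k-u)/\|u_k-u\|$. Second, the finiteness of the second H\"older factor $\int_B e^{q'p\alpha_\beta C_\varepsilon|u|^\gamma}dx$ does not follow from (\ref{eq:1.4}) applied to $u/\|u\|$ when the exponent exceeds $\alpha_\beta$; you need the standard fact that $\int_B e^{M|u|^\gamma}dx<\infty$ for every $M>0$ and every fixed $u\in\mathbf{W}$, proved by approximating $u$ by a bounded $\phi\in C_{0}^\infty$ with $\|u-\phi\|$ small and running the same splitting once more.
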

\begin{proof}
 Since $\left\|  u  \right\| \leq   \underset{ k}{ \liminf}\left\|   u_{k}  \right\|=1$, we will split the evidence into two cases.\\

\textbf{Case $1$} $:\left\|  u  \right\|<1$. We assume by contradiction for some $p_{1}<U(u)$, we have
$$
\sup_{k} \int_{B} \exp \left(\alpha_{\beta} p_{1} u_{k}^{\gamma}\right) d x=+\infty .
$$
Set
$$
B_{\mathcal{L}}^{k}=\left\{x\in B: u_{k}(x) \geq \mathcal{L}\right\}
$$
where $\mathcal{L}$ is a constant that we will choose later. Let $v_{k}=u_{k}-\mathcal{L}$.  we have
\begin{equation}\label {eq:3.1}
(1+a)^{q}\leq (1+\varepsilon) a^{q}+(1-\frac{1}{(1+\varepsilon)^{\frac{1}{q-1}}})^{1-q},~~\forall a\geq0,~~\forall\varepsilon>0~~\forall q>1.
\end{equation}
So, using (\ref{eq:3.2}), we get
\begin{equation}\label {eq:3.2}
      \begin{array}{rcll}
|u_{k}|^{\gamma}&=& |u_{k}-\mathcal{L}+\mathcal{L}|^{\gamma}\\
&\leq& (|u_{k}-\mathcal{L}|+|\mathcal{L}|)^{\gamma}\\
&\leq& (1+\varepsilon)|u_{k}-\mathcal{L}|^{\gamma}+\big(1-\frac{1}{(1+\varepsilon)^{\frac{1}{\gamma-1}}}\big)^{1-\gamma}|\mathcal{L}|^{\gamma}\\
&\leq&(1+\varepsilon) v_{k}^{\gamma}+C(\varepsilon, \gamma) \mathcal{L}^{\gamma} \cdot
 \end{array} \end{equation}
 We have

\begin{align*}
\int_{B} \exp \left(\alpha_{\beta} p_{1} u_{k}^{\gamma}\right) d x&=\int_{B_{L}^{k}} \exp \left(\alpha_{\beta} p_{1} u_{k}^{\gamma}\right) d x+\int_{B \backslash B_{\mathcal{L}}^{k}} \exp \left(\alpha_{\beta} p_{1} u_{k}^{\gamma}\right) d x\\
& \leq \int_{B_{\mathcal{L}}^{k}} \exp \left(\alpha_{\beta} p_{1} u_{k}^{\gamma}\right) d x
+c \exp \left(\alpha_{\beta} p_{1} \mathcal{L}^{\gamma}\right) \\
& \leq \int_{B_{\mathcal{L}}^{k}} \exp \left(\alpha_{\beta} p_{1} u_{k}^{\gamma}\right) d x+c(\mathcal{L}, \gamma ,|B|),
\end{align*}
and then
$$
\sup _{k} \int_{B_{\mathcal{L}}^{k}} \exp \left(\alpha_{\beta} p_{1} u_{k}^{\gamma}\right) d x=\infty .
$$
By (\ref{eq:3.2}) we have

\begin{align*}
\int_{B_{\mathcal{L}}^{k}} \exp \left(\alpha_{\beta} p_{1} u_{k}^{\gamma}\right) d x\leq & \exp \left(\alpha_{\beta} p_{1} C(\varepsilon, \gamma) \mathcal{L}^{\gamma}\right)
  \int_{B_{\mathcal{L}}^{k}} \exp \left((1+\varepsilon) \alpha_{\beta} p_{1} v_{k}^{\gamma}\right) d x .
\end{align*}
Since, $p_{1}<U(u)$, there exists $\varepsilon>0$ such that $\tilde{p}_{1}=(1+\varepsilon) p_{1}<U(u)$.
Thus
\begin{equation}\label{eq:3.3}
\sup _{k} \int_{B_{\mathcal{L}}^{k}} \exp \left(\tilde{p}_{1} \alpha_{\beta} v_{k}^{\gamma}\right) d x=\infty
\end{equation}

Now, we define
$$
T^{\mathcal{L}}(u)=\min \{\mathcal{L}, u\} \mbox{ and } T_{\mathcal{L}}(u)=u-T^{\mathcal{L}}(u)
$$
and choose $\mathcal{L}$ such that
\begin{equation}\label{eq:3.4}
\frac{1-\left\|  u  \right\|^{2}}{1-\left\|  T^{\mathcal{L}} u  \right\|^{2}}>\left(\frac{\tilde{p}_{1}}{U(u)}\right)^{\frac{2}{\gamma}} .
\end{equation}
We claim that
$$
\lim \sup_{k} \int_{B_{\mathcal{L}}^{k}}\omega(x)\left | \triangle v_{k}\right|^{2} d x<\left(\frac{1}{\tilde{p}_{1}}\right)^{\frac{2}{\gamma}} .
$$
If this is not the case, then up to a subsequence, we get
$$
\int_{B_{\mathcal{L}}^{k}}\omega(x)\left| \triangle v_{k}\right|^{2} d x=\int_{B}\omega(x)\left|\triangle  T_{\mathcal{L}} u_{k}\right|^{2} d x\geq\left(\frac{1}{\tilde{p}_{1}}\right)^{\frac{2}{\gamma}}+o_{k}(1) .
$$
Thus,

\begin{align*}
\left(\frac{1}{\tilde{p}_{1}}\right)^{\frac{2}{\gamma }}+\int_{B}\omega(x)\left| \triangle T^{\mathcal{L}} u_{k}\right|^{2} d x+o_{k}(1) & \leq \int_{B}\omega(x)\left|\triangle  T_{\mathcal{L}} u_{k}\right|^{2} d x+\int_{B \backslash B_{\mathcal{L}}^{k}}\omega(x)\left|  \triangle u_{k}\right|^{2} d x\\
&=\int_{B_{\mathcal{L}}^{k}}\omega(x)\left| \triangle u_{k}\right|^{2} d x+\int_{B \backslash B_{\mathcal{L}}^{k}}\omega(x)\left|\triangle  u_{k}\right|^{2} d x=1.
\end{align*}
For $\mathcal{L}>0$ fixed, $T^{\mathcal{L}} u_{k}$ is also bounded in $\mathbf{X}$. Therefore, up to a subsequence, $T^{\mathcal{L}} u_{k} \rightharpoonup$ $T^{\mathcal{L}} u$ weakly in $\mathbf{X}$ and $T^{\mathcal{L}} u_{k} \rightarrow T^{\mathcal{L}} u$ almost everywhere in $B$. By the lower semicontinuity of the norm in $\mathbf{X}$ and the last inequality, we have

$$  \tilde{p}_{1} \geq \frac{1}{\left( 1-  \underset{  k \rightarrow + \infty}{\lim \inf}
 \left\|  T^{\mathcal{L}} u_k  \right\|^{2}\right)^{\frac{\gamma}{2} }}  \geq \frac{1}{\left( 1-
 \left\|  T^{\mathcal{L}} u  \right\|^{2}\right)^{\frac{\gamma}{2} }} , $$

combining with (\ref{eq:3.4}), we obtain

$$  \tilde{p}_{1}  \geq \frac{1}{\left( 1-
 \left\|  T^{\mathcal{L}} u  \right\|^{2}\right)^{\frac{\gamma}{2} }} > \frac{\tilde{p}_{1} }{U(u)} \frac{1}{\left( 1-
 \left\|  T^{\mathcal{L}} u  \right\|^{2}\right)^{\frac{\gamma}{2} }} = \tilde{p}_{1}, $$

which is a contradiction. Therefore
$$
\limsup _{k} \int_{B_{\mathcal{L}}^{k}}\omega(x)\left|\triangle  v_{k}\right|^{2} d x<\left(\frac{1}{\tilde{p}_{1}}\right)^{\frac{2}{\gamma }} .
$$
By the Adam's inequality (\ref{eq:1.4}), we deduce that
$$
\sup _{k} \int_{B_{\mathcal{L}}^{k}} \exp \left(\tilde{p}_{1} \alpha_{\beta} v_{k}^{\gamma}\right) d x<\infty
$$
which is also a contradiction. The proof is finished in this case.\\

\textbf{Case $2$}  $:\left\|  u  \right\|=1$. We can then proceed as in case 1 and obtain
$$
\sup _{k} \int_{B_{\mathcal{L}}^{k}} \exp \left(\tilde{p}_{1}  \alpha_{\beta} v_{k}^{\gamma}\right)  d x =\infty
$$
where $\tilde{p}_{1}=(1+\varepsilon) p_{1}$. Then we have
$$
\lim \sup _{k} \int_{B_{L}^{k}}\omega(x)\left|\triangle  v_{k}\right|^{2} d x=\limsup _{k} \int_{B}\omega(x)\left|\triangle  T_{\mathcal{L}} u_{k}\right|^{2} d x\geq\left(\frac{1}{\tilde{p}_{1}}\right)^{\frac{2}{\gamma }}
$$
thus,

\begin{align*}
\left\|  T^{\mathcal{L}} u  \right\|^{2} \leq  \liminf _{k} \int_{B}\omega(x)\left|\triangle  T^{\mathcal{L}} u_{k}\right|^{2} d x\leq1
 -\limsup_{k} \int_{B}\left|\triangle  T_{\mathcal{L}} u_{k}\right|^{2} d x\leq 1-\left(\frac{1}{\tilde{p}_{1}}\right)^{\frac{2}{\gamma }} .
\end{align*}
On the other hand, since $\left\|  u  \right\|=1$, we can take $\mathcal{L}$ large enough such that
$$
\left\|  T^{\mathcal{L}} u  \right\|^{2}>1-\frac{1}{2}\left(\frac{1}{\tilde{p}_{1}}\right)^{\frac{2}{\gamma }}
$$
which is a contradiction, and the proof is complete in this case.

\end{proof}
\subsection{The Palais-Smale sequence}
The main difficulty in the approach to the critical problem of growth is the lack of compactness. Precisely,
the overall condition of Palais-Smale does not hold except for a certain level of energy. In the following
proposition, we identify the first level of compactness.
\begin{proposition}\label{Prop3.1} Suppose that $(H_{1}), (H_{2})$ and $(H_{3})$ hold. If the function $f(x,t)$ satisfies the condition (\ref{eq:1.5}) for some
$\alpha_{0} >0$, then the functional $\mathcal{J}$ satisfies the Palais-Smale condition $(PS)_{c}$ for any
$$c<\displaystyle\frac{1}{2}(\frac{\alpha_{\beta}}{\alpha_{0}})^{\frac{2}{\gamma}},$$
where $\alpha_{\beta}=4[8\pi^{2}(1-\beta)]^{\frac{1}{1-\beta}}.$

\end{proposition}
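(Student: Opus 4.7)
The proof follows the classical three-stage template for Palais--Smale compactness in critical exponential settings. Let $(u_n) \subset \mathbf{W}$ be a $(PS)_c$ sequence, so $\mathcal{J}(u_n) \to c$ and $\mathcal{J}'(u_n) \to 0$ in $\mathbf{W}'$. The first task is boundedness of $\|u_n\|$. Combining the two identities $\|u_n\|^2 - \int_B f(x,u_n)u_n\,dx = o(\|u_n\|)$ and $\tfrac12 \|u_n\|^2 - \int_B F(x,u_n)\,dx = c + o(1)$ yields $\int_B [\tfrac12 f(x,u_n) u_n - F(x,u_n)]\,dx = c + o(1) + o(\|u_n\|)$. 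The integrand is nonnegative by $(H_3)$, and choosing the threshold in $(H_2)$ larger than $2M$ upgrades the inequality to $\tfrac12 f u_n - F \ge \varepsilon F$ on $\{u_n > t_0\}$, producing $\int_B F(x,u_n)\,dx \le C + o(\|u_n\|)$. Feeding this back into the energy identity yields $\|u_n\| = O(1)$.

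Passing to a subsequence, $u_n \rightharpoonup u$ weakly in $\mathbf{W}$, $u_n \to u$ almost everywhere, and $u_n \to u$ in every $L^q(B)$ by the compact embedding $\mathbf{W} \hookrightarrow L^q(B)$. I would next show $f(x,u_n) \to f(x,u)$ in $L^1(B)$ by Vitali's theorem: the uniform $L^1$-bound on $f(x,u_n) u_n$ (coming from $\|u_n\|^2 = \int f u_n + o(1)$) forces equi-integrability of $f(x,u_n)$ on $\{u_n > R\}$ as $R \to \infty$, while on $\{u_n \le R\}$ continuity together with $(H_1)$ makes dominated convergence applicable. Passing to the limit in $\langle \mathcal{J}'(u_n), \varphi\rangle = o(1)$ for $\varphi$ in a dense class of test functions identifies $u$ as a weak solution of (\ref{eq:1.1.1}); testing the weak equation against $u$ itself and invoking $(H_3)$ then gives $\int_B F(x,u)\,dx \le \tfrac12 \|u\|^2$, so $\mathcal{J}(u) \ge 0$.

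The decisive step is the strong convergence $u_n \to u$ in $\mathbf{W}$, and here the level assumption enters. From $\mathcal{J}(u_n) \to c$ together with $\int_B F(x,u_n) \to \int_B F(x,u)$ one obtains $\lim_n \tfrac12\|u_n\|^2 = c + \tfrac12\|u\|^2 - \mathcal{J}(u) \le c + \tfrac12\|u\|^2$, and the Hilbert-space Brezis--Lieb identity $\|u_n\|^2 = \|u_n - u\|^2 + \|u\|^2 + o(1)$ gives $\limsup_n \|u_n - u\|^2 \le 2c < (\alpha_\beta/\alpha_0)^{2/\gamma}$. Pick $a > \alpha_0$ and $p > 1$ so close to $\alpha_0$ and $1$ that $pa \limsup_n \|u_n\|^\gamma < \alpha_\beta$. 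In the case $u \equiv 0$, this threshold follows at once from Adams' inequality (\ref{eq:1.4}) applied to $u_n/\|u_n\|$, giving $\sup_n \int_B e^{pa|u_n|^\gamma}\,dx < \infty$. In the case $u \not\equiv 0$, the refined Lions-type Theorem \ref{lem3.1} applied to the normalized sequence $u_n/\|u_n\|$ delivers the same bound, since the threshold $L^2 - \|u\|^2 < (\alpha_\beta/\alpha_0)^{2/\gamma}$ translates precisely into the admissible range $pa\|u_n\|^\gamma/\alpha_\beta < U(u/L)$. A H\"older argument based on $|f(x,t)| \le C e^{at^\gamma}$ and the strong $L^{p'}$-convergence of $u_n$ then yields $\int_B f(x,u_n)(u_n - u)\,dx \to 0$, and combined with $\langle \mathcal{J}'(u_n), u_n - u\rangle \to 0$ this forces $\|u_n\|^2 \to \|u\|^2$ and hence $u_n \to u$ strongly in $\mathbf{W}$.

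The main obstacle is this final threshold step: the bound $c < \tfrac12(\alpha_\beta/\alpha_0)^{2/\gamma}$ is exactly what is needed to produce the strict inequality $pa\|u_n - u\|^\gamma < \alpha_\beta$ which unlocks the Adams--Lions integrability, and the dichotomy $u \equiv 0$ versus $u \not\equiv 0$ is essential because only in the nontrivial case does the refined Theorem \ref{lem3.1} buy the extra room beyond the flat Adams threshold. A secondary but genuine difficulty is the $L^1$-convergence $f(x,u_n) \to f(x,u)$, which has to be run through Vitali and $(H_3)$ because the exponential Adams growth rules out any direct dominated-convergence argument.
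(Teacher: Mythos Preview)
Your argument is correct and follows essentially the same route as the paper: boundedness from $(H_2)$--$(H_3)$, $L^1$ convergence of $f(x,u_n)$ (the paper cites the de Figueiredo--Miyagaki--Ruf lemma, which is precisely your Vitali argument), identification of $u$ as a weak solution with $\mathcal{J}(u)\ge 0$, and then the Adams/Lions threshold dichotomy $u\equiv 0$ versus $u\not\equiv 0$ to bound $f(x,u_n)$ in some $L^q$, $q>1$, and conclude $\|u_n\|\to\|u\|$ via H\"older. The only differences are cosmetic: the paper isolates $c=0$ as a separate trivial case and runs the $u\neq 0$ step as a contradiction argument (assume $\mathcal{J}(u)<c$, derive strong convergence, contradict), while you use Brezis--Lieb and argue directly; be aware that your sentence ``$pa\limsup_n\|u_n\|^\gamma<\alpha_\beta$'' is only literally valid when $u=0$, but your immediate case split using Theorem~\ref{lem3.1} with the correct threshold $L^2-\|u\|^2\le 2c$ repairs this.
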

\begin{proof}

Consider a $(PS)_{c}$ sequence in $\mathbf{W}$, for some $c\in \R$, that is
\begin{equation}\label{eq:3.5}
\mathcal{J}(u_{n})=\frac{1}{2}\|u_{n}\|^{2}-\int_{B}F(x,u_{n})dx \rightarrow c ,~~n\rightarrow +\infty
\end{equation}
and
 \begin{equation}\label{eq:3.6}
|\langle \mathcal{J}'(u_{n}),\varphi \rangle|=\Big|\int_{B}w(x)\Delta u_{n}.\Delta \varphi dx -\int_{B}f(x,u_{n})\varphi  dx
\Big|\leq \varepsilon_{n}\|\varphi\|,
\end{equation}
for all $\varphi \in \mathbf{W}$, where $\varepsilon_{n}\rightarrow0$, when $n\rightarrow +\infty$.\\
Also, inspired by \cite{CRS}, it follows from  $(H_{2})$  that for all
$\varepsilon>0$ there exists $t_{\varepsilon}>0$ such that
\begin{equation}\label{eq:3.7}
 F(x,t)\leq \varepsilon t f(x,t),~~~~\mbox{for all}~~ |t|>t_{\varepsilon}~~\mbox{and uniformly in}~~ x\in B,
 \end{equation}
and so, by (\ref{eq:3.5}), for all $\varepsilon>0$ there exists a constant $C>0$
$$\displaystyle\frac{1}{2}\|u_{n}\|^{2}\leq C+\displaystyle\int_{B}F(x,u_{n})dx,$$
hence
$$
\displaystyle\frac{1}{2}\|u_{n}\|^{2}\leq  C +\displaystyle\int_{{|u_{n}|\leq t_{\varepsilon}}}F(x,u_{n})dx+ \varepsilon \int_{B}f(x,u_{n})u_{n}dx $$
and so, from (\ref{eq:3.6}), we get
$$\displaystyle\frac{1}{2}\|u_{n}\|^{2}\leq C_{1}+\varepsilon \varepsilon_{n}\|u_{n}\|+ \varepsilon \|u_{n}\|^{2},$$
for some constant $ C_{1}>0$.
Since
\begin{equation}\label{eq:3.8}
\displaystyle(\frac{1}{2}-\varepsilon)\|u_{n}\|^{2}\leq C_{1}+\varepsilon \varepsilon_{n}\|u_{n}\|,
 \end{equation}
 we deduce that the sequence $(u_{n})$ is bounded in $\mathbf{W}$. As consequence, there exists $u\in \mathbf{W}$ such that, up to subsequence,
 $u_{n}\rightharpoonup u $ weakly in $\mathbf{W}$, $u_{n}\rightarrow u$ strongly in $L^{q}(B)$, for all $1\leq q<4$ and $u_{n}(x)\rightarrow u(x)$ a.e. in $B$. \\
Furthermore, we have, from (\ref{eq:3.5}) and (\ref{eq:3.6}), that
\begin{equation}\label{eq:3.9}
0<\int_{B} f(x,u_{n})u_{n}\leq C,
 \end{equation}
and
 \begin{equation}\label{eq:3.10}
0<\int_{B} F(x,u_{n})\leq C.
 \end{equation}
Since by Lemma 2.1 in \cite {FMR}, we have
\begin{equation}\label{eq:3.11}
f(x,u_{n})\rightarrow f(x,u) ~~\mbox{in}~~L^{1}(B) ~~as~~ n\rightarrow +\infty,
 \end{equation}
then, it follows from $(H_{2})$ and the generalized Lebesgue dominated convergence theorem that
\begin{equation}\label{eq:3.12}
F(x,u_{n})\rightarrow F(x,u) ~~\mbox{in}~~L^{1}(B) ~~as~~ n\rightarrow +\infty.
 \end{equation}
So,
\begin{equation}\label{eq:3.13}
\displaystyle
\lim_{n\rightarrow+\infty}\|u_{n}\|^{2}=2(c+\int_{B}F(x,u)dx).
 \end{equation}
 Using (\ref{eq:3.5}), we have
\begin{equation}\label{eq:3.14}
\displaystyle\lim_{n\rightarrow+\infty}\int_{B}f(x,u_{n})u_{n}dx=2(c+\int_{B}F(x,u)dx).
 \end{equation}
 Then by $(H_{3})$ and (\ref{eq:3.6}), we get
\begin{equation}\label{eq:3.15}
\displaystyle
\lim_{n\rightarrow+\infty}2 \int_{B}F(x,u_{n})dx\leq \displaystyle
\lim_{n\rightarrow+\infty}\int_{B}f(x,u_{n})u_{n}dx= 2(c+\int_{B}F(x,u)dx).
\end{equation}
As a direct consequence from (\ref{eq:3.11}) and (\ref{eq:3.12}), we get  $c\geq0 $.\\
Also, it follows from (\ref{eq:3.5}), (\ref{eq:3.6}), (\ref{eq:3.11}) and (\ref{eq:3.12}), by passing to the limit, we obtain  that $u$ is a weak solution of the problem (\ref{eq:1.1.1}) that is
\begin{equation*}\label {eq:2.2}
\int_{B}\triangle u .\triangle\varphi~ w(x) dx =
\int_{B}f(x,u) \varphi dx,~~\forall~\varphi \in \mathbf{W}.
\end{equation*} Taking $\varphi=u$ as a test function, we get
$$\int_{B}|\triangle u|^{2}~ w(x) dx =
\int_{B}f(x,u)u dx\geq 2\int_{B}F(x,u)dx\cdot$$
~Hence $\mathcal{J}(u)\geq 0$ . We also have by the Fatou's lemma and (\ref{eq:3.12}) $$0\leq\mathcal{J}(u)\leq \frac{1}{2}\liminf_{n\rightarrow\infty} \|u_{n}\|^{2}-\int_{B}F(x,u)dx=c.$$

So, we will finish the proof by considering  three cases for the level $c$.\\\\
{\it Case 1.} $c=0$.~~ In this case
 $$0\leq \mathcal{J}(u)\leq \liminf_{n\rightarrow+\infty}\mathcal{J}(u_{n})=0.$$
So, $$\mathcal{J}(u)=0$$
   and then by (\ref{eq:3.12})
$$\displaystyle\lim_{n\rightarrow +\infty}\frac{1}{2}\|u_{n}\|^{2}=\int_{B}F(x,u)dx=\frac{1}{2}\|u\|^{2}.$$
It follows that $u_{n}\rightarrow u~~\mbox{in}~~\mathbf{W}$.\\
\noindent {\it Case 2.} $c>0$ and $u=0$. We prove that this case cannot happen.\\
From (\ref{eq:3.5}) and (\ref{eq:3.6}) with $v=u_{n}$, we have
$$\displaystyle\lim_{n\rightarrow +\infty}\|u_{n}\|^{2}=2c~~ \mbox{and}~~\displaystyle\lim_{n\rightarrow +\infty}\int_{B}f(x,u_{n})u_{n}dx=2c.$$
Again by (\ref{eq:3.6}) we have
$$\big| \|u_{n}\|^{2}-\int_{B}f(x,u_{n})u_{n}dx\big|\leq C\varepsilon_{n}.$$
First we claim that there exists $q>1$ such that
\begin{equation}\label{eq:3.16}
\int_{B}|f(x,u_{n})|^{q}dx\leq C,
\end{equation}
so
$$\|u_{n}\|^{2}\leq C\varepsilon_{n}+\big(\int_{B}|f(x,u_{n})|^{q}\big)^{\frac{1}{q}}dx(\int_{B}|u_{n}|^{q'}\big)^{\frac{1}{q'}}$$
where $q'$ the conjugate of $q$. Since $(u_{n})$ converge to $u=0$ in $L^{q'}(B)$,
$$\displaystyle\lim_{n\rightarrow +\infty}\|u_{n}\|^{2}=0$$
which in contradiction with $c>0$.\\\\
For the proof of the claim, since $f$ has subcritical or critical growth, for every
$\varepsilon>0$ and $q>1$ there exists $t_{\varepsilon}>0$ and $C>0$
such that for all $|t|\geq
t_{\varepsilon}$, we have
\begin{equation}\label{eq:3.17}
|f(x,t)|^{q}\leq
Ce^{\alpha_{0}(1+\varepsilon) t^{\gamma}}.
\end{equation}
Consequently,
$$\begin{array}{rcll}
\displaystyle\int_{B}|f(x,u_{n})|^{q}dx&=&\displaystyle\int_{\{|u_{n}|\leq
t_{\varepsilon}}|f(x,u_{n})|^{q}dx
+\int_{\{|u_{n}|>t_{\varepsilon}\}}|f(x,u_{n})|^{q}dx\\
 &\leq &\displaystyle2\pi^{2} \max_{B\times [-t_{\varepsilon},t_{\varepsilon}]}|f(x,t)|^{q}+ C\int_{B}e^{\alpha_{0}(1+\varepsilon)|u_{n}|^{\gamma}}\big)dx.
 \end{array}$$
Since $2c<\displaystyle(\frac{\alpha_{\beta}}{\alpha_{0}})^{\frac{2}{\gamma}}$, there exists
$\eta\in(0,\frac{1}{2})$ such that
$2c=\displaystyle(1-2\eta)\displaystyle(\frac{\alpha_{\beta}}{\alpha_{0}})^{\frac{2}{\gamma}}$.
On the other hand, $\|u_{n}\|^{\gamma}\rightarrow
(2c)^{\frac{\gamma}{2}}$, so there exists $n_{\eta}>0$ such that for
all $n\geq n_{\eta}$, we get $\|u_{n}\|^{\gamma}\leq
(1-\eta)\frac{\alpha_{\beta}}{\alpha_{0}}$.
Therefore,
$$\alpha_{0}(1+\varepsilon)(\frac{|u_{n}|}{\|u_{n}\|})^{\gamma}\|u_{n}\|^{\gamma}\leq
(1+\varepsilon)(1-\eta)\alpha_{\beta}\cdot$$
 We choose $\varepsilon >0$ small enough to get
 $$\alpha_{0}(1+\varepsilon) \|u_{n}\|^{\gamma}\leq \alpha_{\beta}\cdot $$
 Therefore, the second integral is
uniformly bounded in view of (\ref{eq:1.4}) and the claim is
proved.\\\\
{\it Case 3.} $c>0$ and $u\neq 0$.~~ In this case,
we claim that $\mathcal{J}(u)=c$ and therefore, we get
$$\lim_{n\rightarrow
+\infty}\|u_{n}\|^{2}=2\big(c+\int_{B}F(x,u)dx\big)=2\big(\mathcal{J}(u)+\int_{B}F(x,u)dx\big)=\|u\|^{2}.$$
Do not forgot that
 $$\mathcal{J}(u)\leq \frac{1}{2}\liminf_{n\rightarrow+\infty} \|u_{n}\|^{2}-\int_{B}F(x,u)dx=c.$$
Suppose that $\mathcal{J}(u)<c$. Then,
\begin{equation}\label{eq:3.18}
\|u\|^{\gamma}<(2\big(c+\int_{B}F(x,u)dx\big)\big)^{\frac{\gamma}{2}}.
\end{equation}
Set
$$v_{n}=\displaystyle\frac{u_{n}}{\|u_{n}\|}$$ and
 $$v=\displaystyle\frac{u}{(2\big(c+\displaystyle\int_{B}F(x,u)dx\big))^{\frac{1}{2}}}\cdot$$
We have $\|v_{n}\|=1$, $v_{n}\rightharpoonup v$  in $\mathbf{W}$, $v\not\equiv 0$ and $\|v\|<1$. So, by Theorem \ref{lem3.1}, we get
$$\displaystyle \sup_{n}\int_{B}e^{p \alpha_{\beta}|v_{n}|^{\gamma}}dx<\infty,$$ for
$1<p<U(v)=(1-\|v\|^{2})^{\frac{-\gamma}{2}}$.\\
 As in the case $(2)$, we are going to estimate $\displaystyle\int_{B}|f(x,u_{n})|^{q}dx$.\\
For  $\varepsilon>0$, one has
$$\begin{array}{rclll}
\displaystyle\int_{B}|f(x,u_{n})|^{q}dx&=&\displaystyle\int_{\{|u_{n}|\leq
t_{\varepsilon}}|f(x,u_{n})|^{q}dx
+\int_{\{|u_{n}|>t_{\varepsilon}\}}|f(x,u_{n})|^{q}dx\\
 &\leq &\displaystyle 2\pi^{2} \max_{B\times [-t_{\varepsilon},t_{\varepsilon}]}|f(x,t)|^{q}+ C\int_{B}e^{\alpha_{0}(1+\varepsilon )|u_{n}|^{\gamma}}dx\\
&\leq & C_{\varepsilon}+
 C\displaystyle\int_{B}e^{\alpha_{0}(1+\varepsilon)\|u_{n}\|^{\gamma}|v_{n}|^{\gamma}}\big)dx\leq C,
 \end{array}$$
provided that $\alpha_{0}(1+\varepsilon)\|u_{n}\|^{\gamma}\leq p~~
\alpha_{\beta}$ and
$1<p<U(v)=(1-\|v\|^{2})^{\frac{-\gamma}{2}}$.\\
Since
$$\displaystyle(1-\|v\|^{2})^{\frac{-\gamma}{2}}=\displaystyle\big(\frac{2(c+\int_{B}F(x,u)dx)}{2(c+\int_{B}F(x,u)dx)-\|u\|^{2})}\big)^{\frac{\gamma}{2}}=
\big(\frac{c+\int_{B}F(x,u)dx}{c-\mathcal{J}(u)}\big)^{\frac{\gamma}{2}}$$
and
$$\displaystyle\lim_{n\rightarrow+\infty}\|u_{n}\|^{\gamma}=(2\big(c+\int_{B}F(x,u)dx)\big)^{\frac{\gamma}{2}},$$
then, for large $n$,
 $$\alpha_{0}(1+\varepsilon)\|u_{n}\|^{\gamma}\leq \alpha_{0}(1+2\varepsilon)
(2\big(c+\int_{B}F(x,u)dx\big)^{\frac{\gamma}{2}}.$$
But $\mathcal{J}(u)\geq 0$ and $c<\displaystyle\frac{1}{2}(\frac{\alpha_{\beta}}{\alpha_{0}})^{\frac{2}{\gamma}}$ , then
if we choose $\varepsilon>0$ small enough such that $$ \frac{\alpha_{0}}{\alpha_{\beta}}(1+2 \varepsilon)
<\displaystyle\big(\frac{1}{2(c-\mathcal{J}(u))}\big)^{\frac{\gamma}{2}},$$
we get,
 $$
 (1+2\varepsilon)\displaystyle\big((c-\mathcal{J}(u)\big)^{\frac{\gamma}{2}}
<\frac{\alpha_{\beta}}{2^{\frac{\gamma}{2}}\alpha_{0}}\cdot
$$
 So, the sequence $(f(x,u_{n}))$ is bounded in $L^{q}$, $q>1$.\\
 Since $\langle \mathcal{J}'(u_{n}),(u_{n}-u)\rangle=o(1)$,  we have from the boundedness of
$\{f(x,u_{n})\}$ in $L^{q}(B)$ for $q>1$, we can prove that
$u_{n}\rightarrow u$ strongly in $\mathbf{W}$. Indeed, we have
  $$ \|u_{n}-u\|^{2}=\langle u_{n},u_{n}-u\rangle-\langle u,u_{n}-u\rangle=\langle u_{n},u_{n}-u\rangle+o_{n}(1)\rightarrow 0~~as~~ n\rightarrow+\infty.$$
 From (\ref{eq:3.6}) and using the
H\"{o}lder inequality, we get
$$\begin{array}{rclllll}
\displaystyle|<u_{n},u_{n}-u>|&\leq& \displaystyle\varepsilon_{n}\|u_{n}-u\|+\big|\int_{B}f(x,u_{n})(u_{n}-u)dx\big|\\
 &\leq&C \displaystyle\varepsilon_{n}+\big(\int_{B}|f(x,u_{n}|^{q}dx\big)^{\frac{1}{q}}(\int_{B}|u_{n}-u|^{q'})^{\frac{1}{q'}}dx\rightarrow
0~~as~~ n\rightarrow+\infty .\end{array}$$
 Hence,
$$\displaystyle\lim_{n\rightarrow+\infty}\|u_{n}\|^{2}=2(c+\int_{B}F(x,u)dx)=\|u\|^{2}$$
and this contradicts (\ref{eq:3.18}). So, $\mathcal{J}(u)=c$ and  consequently, $u_{n} \rightarrow u$.

\end{proof}

\section{Proof of Theorem \ref{th1.3}}

 In the sequel, we will estimate the minimax level of the energy $\mathcal{J}$.
We will  prove that the mountain pass level $c$ in Theorem \ref{th2.11} satisfies
$$c<\displaystyle\frac{1}{2}(\frac{\alpha_{\beta}}{\alpha_{0}})^{\frac{2}{\gamma}}\cdot$$
 For this purpose , we will prove that there exists $v_{0}\in \mathbf{W}$ such
  \begin{equation}\label{eq:4.1}
 \max_{t\geq0}\mathcal{J}(tv_{0})<\displaystyle\frac{1}{2}(\frac{\alpha_{\beta}}{\alpha_{0}})^{\frac{2}{\gamma}}\cdot
  \end{equation}
  \subsection{Adams functions}
Now, we will construct particular functions, namely the Adams functions.
  We consider the sequence defined for all $n\geq3$  by
  \begin{equation}\label{eq:4.2}
\end{equation}
$$w_{n}(x)=\displaystyle \left\{
      \begin{array}{rcllll}
&\displaystyle\bigg(\frac{\log (e \sqrt[4]{n})}{\alpha_{\beta}}\bigg)^{\frac{1}{\gamma}}-\frac{|x|^{2(1-\beta)}}{2\big(\frac{\alpha_{\beta}}{4n}\big)^{\frac{1}{\gamma}}\big(\log  (e\sqrt[4]{n})\big)^{\frac{\gamma-1}{\gamma}}}+\frac{1}{2(\frac{\alpha_{\beta}}{4})^{\frac{1}{\gamma}}\big(\log  (e\sqrt[4]{n})\big)^{\frac{\gamma-1}{\gamma}}}& \mbox{ if } 0\leq |x|\leq \frac{1}{\sqrt[4]{n}}\\\\\\
       &\displaystyle  \frac{\bigg(\log(\frac{e}{|x|})\bigg)^{1-\beta}}{\bigg(\frac{\alpha_{\beta}}{4}\log (e\sqrt[4]{n})\bigg)^{\frac{1}{\gamma}}} & \mbox{  if } \frac{1}{\sqrt[4]{n}}\leq|x|\leq \frac{1}{2}\\
       &\zeta_{n}&\mbox{  if }\frac{1}{2}\leq |x|\leq 1
 \end{array}
    \right.$$

     where $\zeta_{n}\in C^{\infty}_{0}(B)$ is such that\\
      $\zeta_{n}(\frac{1}{2})=\frac{1}{\big(\frac{\alpha_{\beta}}{16}\log (e^{4}n)\big)^{\frac{1}{\gamma}}}\big(\log 2e \big)^{1-\beta}$,
      $\displaystyle\frac{\partial \zeta_{n}}{\partial x}(\frac{1}{2})=\frac{-2(1-\beta)}{\bigg(\frac{\alpha_{\beta}}{4}\log (e\sqrt[4]{n})\bigg)^{\frac{1}{\gamma}}} \big(\log (2e)\big)^{-\beta}$ \newline  $\displaystyle\zeta_{n}=\frac{\partial \zeta_{n}}{\partial x}=0~~\mbox{on}~~\partial B$ and the functions $\xi_{n}$, $\nabla \xi_{n}$, $\Delta \xi_{n}$ are all $\displaystyle o\bigg(\frac{1}{\log (e\sqrt[4]{n})}\bigg)$.\\
     Let $v_{n}(x)=\displaystyle\frac{w_{n}}{\|w_{n}\|}$.
 We have, $v_{n}\in \mathbf{W}$ , $\|v_{n}\|^{2}=1.$ \\

 We compute $\Delta w_{n}(x)$, we get  \begin{equation*}\label{eq:4.2}\Delta w_{n}(x)=\displaystyle \left\{
      \begin{array}{rclll}
\displaystyle\frac{-(1-\beta)(4-2\beta)|x|^{-2\beta}}{\big(\frac{\alpha_{\beta}}{4n}\big)^{\frac{1}{\gamma}}\big(\log  (e\sqrt[4]{n})\big)^{\frac{\gamma-1}{\gamma}}}\mbox{      }\mbox{      }\mbox{      }\mbox{      }\mbox{      }\mbox{      }\mbox{      } & \mbox{ if } 0\leq |x|\leq \frac{1}{\sqrt[4]{n}}\\\\
       \displaystyle  \frac{-(1-\beta)\bigg(\log(\frac{e}{|x|})\bigg)^{-\beta}\bigg(2+\beta\big(\log \frac{e}{|x|}\big)^{-1}\bigg)}{\bigg(\frac{\alpha_{\beta}}{4}\log (e\sqrt[4]{n})\bigg)^{\frac{1}{\gamma}}} & \mbox{ if } \frac{1}{\sqrt[4]{n}}\leq|x|\leq \frac{1}{2}\\\\
       \triangle \zeta_{n}\mbox{      }\mbox{      }\mbox{      }\mbox{      }\mbox{      }
\mbox{      }\mbox{      }\mbox{      }\mbox{      }\mbox{      }\mbox{      }\mbox{      }
\mbox{      }\mbox{      }\mbox{      }\mbox{      }\mbox{      }\mbox{      }\mbox{      }
\mbox{      }\mbox{      }\mbox{      }\mbox{      }\mbox{      }\mbox{      }\mbox{      }
\mbox{      }\mbox{      }\mbox{      }\mbox{      } &\mbox{ if }\frac{1}{2}\leq |x|\leq 1
 \end{array}
    \right.
  \end{equation*}
  So, $$\|w_{n}\|^{2}=\underbrace{2\pi^{2}\int^{\frac{1}{\sqrt[4]{n}}}_{0}r^{3}|\Delta w_{n}(x)|^{2}\big(\log \frac{e}{r}\big)^{\beta}dr}_{I_{1}}+\underbrace{2\pi^{2}\int^{\frac{1}{2}}_{\frac{1}{\sqrt[4]{n}}}r^{3}|\Delta w_{n}(x)|^{2}\big(\log \frac{e}{r}\big)^{\beta}dr}_{I_{2}}+\underbrace{2\pi^{2}\int^{1}_{\frac{1}{2}}r^{3}|\Delta w_{n}(x)|^{2}\big(\log \frac{e}{r}\big)^{\beta}dr}_{I_{3}}$$
  we have,$$\begin{array}{rclll}\displaystyle I_{1}&= & \displaystyle 2\pi^{2}\frac{(1-\beta)^{2}(4-2\beta)^{2}}{\big(\frac{\alpha_{\beta}}{4n}\big)^{\frac{2}{\gamma}}\big(\log  (e\sqrt[4]{n})\big)^{\frac{2(\gamma-1)}{\gamma}}}\int^{\frac{1}{\sqrt[4]{n}}}_{0} r^{3-4\beta}\big(\log \frac{e}{r}\big)^{\beta}dr\\
&=&\displaystyle 2\pi^{2}\frac{(1-\beta)^{2}(4-2\beta)^{2}}{\big(\frac{\alpha_{\beta}}{4n}\big)^{\frac{2}{\gamma}}\big(\log  (e\sqrt[4]{n})\big)^{\frac{2(\gamma-1)}{\gamma}}}\left[  \frac{r^{4-4\beta}}{4-4\beta}(\log \frac{e}{r}\big)^{\beta} \right]^{\frac{1}{\sqrt[4]{n}}}_{0}\\&+& 2 \pi^{2}\frac{\beta(1-\beta)^{2}(4-2\beta)^{2}}{\big(\frac{\alpha_{\beta}}{4n}\big)^{\frac{2}{\gamma}}\big(\log  (e\sqrt[4]{n})\big)^{\frac{2(\gamma-1)}{\gamma}}}\displaystyle\int^{\frac{1}{\sqrt[4]{n}}}_{0}\frac{r^{4-4\beta}}{4-4\beta} \big(\log \frac{e}{r}\big)^{\beta-1} dr\\
&=&\displaystyle o\big(\frac{1}{\log e\sqrt[4]{n}}\big)\cdot\\

\end{array}$$
Also, $$\begin{array}{rclll}\displaystyle I_{2}&= & \displaystyle 2\pi^{2}\frac{(1-\beta)^{2}}{\big(\frac{\alpha_{\beta}}{4}\big)^{\frac{2}{\gamma}}\big(\log  (e\sqrt[4]{n})\big)^{\frac{2}{\gamma}}}\int_{\frac{1}{\sqrt[4]{n}}}^{\frac{1}{\frac{1}{2}} }\frac{1}{r}\big(\log \frac{e}{r}\big)^{-\beta}\big (2+\beta \big( \log \frac{e}{r}\big)^{-1}\big)^{2}dr\\
&=&\displaystyle -2\pi^{2}\frac{(1-\beta)^{2}}{\big(\frac{\alpha_{\beta}}{4}\big)^{\frac{2}{\gamma}}\big(\log  (e\sqrt[4]{n})\big)^{\frac{2}{\gamma}}}\left[ \frac{\beta^{2}}{-1-\beta}\big( \log \frac{e}{r}\big)^{-\beta-1}+4 \big( \log \frac{e}{r}\big)^{-\beta}+\frac{4}{1-\beta}\big( \log \frac{e}{r}\big)^{1-\beta}\right]^{\frac{1}{2}}_{\frac{1}{\sqrt[4]{n}}}\\
&=&\displaystyle 1+ o\big(\frac{1}{(\log e\sqrt[4]{n})^{\frac{2}{\gamma}}}\big)\cdot\\
\end{array}$$
and $I_{3}=\displaystyle  o\big(\frac{1}{(\log e\sqrt[4]{n})^{\frac{2}{\gamma}}}\big).$ Then $\|w_{n}\|^{2}=1+o\big(\frac{1}{(\log e\sqrt[4]{n})^{\frac{2}{\gamma}}}\big)$. Also,\\ for $0\leq |x|\leq \frac{1}{\sqrt[4]{n}}$, $\displaystyle v^{\gamma}_{n}(x)\geq \displaystyle\bigg(\frac{\log (e\sqrt[4]{n})}{\alpha_{\beta}}\bigg) +o(1)\cdot$
 \subsection{Min-Max level estimate}
We are going to the desired estimate.

\begin{lemma}\label{lem4.2}~~For the sequence $(v_{n})$ identified by (\ref{eq:4.2}), there exists $n\geq1$ such that
  \begin{equation}\label{eq:4.3}
 \max_{t\geq 0}\mathcal{J}(tv_{n})<\displaystyle\frac{1}{2}(\frac{\alpha_{\beta}}{\alpha_{0}})^{\frac{2}{\gamma}}\cdot
       \end{equation}
\end{lemma}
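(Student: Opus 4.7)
The strategy is to argue by contradiction: suppose $\max_{t\geq 0}\mathcal{J}(tv_n)\geq C_0:=\tfrac{1}{2}(\alpha_\beta/\alpha_0)^{2/\gamma}$ for every $n$. Since $\mathcal{J}(0)=0$, $\mathcal{J}(tv_n)\to -\infty$ as $t\to\infty$ (by $(H_2)$, as in Lemma~\ref{lem2.4}), and $t\mapsto\mathcal{J}(tv_n)$ is continuous, the maximum is attained at some $t_n>0$. Differentiating at $t_n$ and using $\|v_n\|=1$ yields the critical relation
$$t_n^{2}=\int_{B}f(x,t_nv_n)\,t_nv_n\,dx,$$
whereas $\mathcal{J}(t_nv_n)\geq C_0$ combined with $F\geq 0$ forces $t_n^{2}\geq (\alpha_\beta/\alpha_0)^{2/\gamma}$.

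The first subclaim is that $(t_n)$ is bounded. Using $(H_5)$ together with the pointwise lower bound $v_n^{\gamma}(x)\geq \log(e\sqrt[4]{n})/\alpha_\beta+o(1)$ on $B_n:=\{|x|\leq 1/\sqrt[4]{n}\}$ established in the construction of $v_n$, one obtains
$$t_n^{2}\geq (\gamma_0-\varepsilon)\int_{B_n}e^{\alpha_0(t_nv_n)^{\gamma}}\,dx\;\geq\; c\,n^{\alpha_0 t_n^{\gamma}/(4\alpha_\beta)-1}(1+o(1)).$$
If $t_n\to\infty$ along a subsequence, the exponent $\alpha_0 t_n^{\gamma}/(4\alpha_\beta)-1$ tends to $+\infty$, so the right-hand side grows faster than any polynomial in $n$, incompatible with the polynomial relation $t_n^{2}=(t_n^{\gamma})^{2/\gamma}$. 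Hence $(t_n)$ is bounded and, extracting a subsequence, $t_n\to t^{*}$ with $(t^{*})^{\gamma}\geq \alpha_\beta/\alpha_0$.

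The crux is to pin down $t^{*}$ by analyzing the annulus $A_n:=\{1/\sqrt[4]{n}\leq |x|\leq 1/2\}$, on which the explicit form of $w_n$ yields $v_n^{\gamma}(x)=\dfrac{4(\log(e/|x|))^{2}}{\alpha_\beta\log(e\sqrt[4]{n})}(1+o(1))$. Setting $\mu:=\alpha_0 t_n^{\gamma}/\alpha_\beta$, $L:=\log(e\sqrt[4]{n})$, and passing to $y=\log(e/|x|)$ transforms the key integral into
$$\int_{A_n}e^{\alpha_0(t_nv_n)^{\gamma}}\,dx\;\sim\;2\pi^{2}e^{4}\int_{\log(2e)}^{L}e^{-4y+4\mu y^{2}/L}\,dy.$$
At $y=L$ the integrand equals $e^{4L(\mu-1)}$; if $\mu-1$ stays bounded below by any positive constant, this produces polynomial growth in $n$, contradicting the boundedness of $t_n^{2}$. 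Consequently $\mu\to 1$, i.e.\ $(t^{*})^{\gamma}=\alpha_\beta/\alpha_0$, and a Laplace-type estimate near $y=L$ identifies the annular integral's limit as $\pi^{2}e^{4}/2$.

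Applying $(H_5)$ once more with this limiting value and passing to the limit in the critical relation yields
$$(\alpha_\beta/\alpha_0)^{2/\gamma}=(t^{*})^{2}\;\geq\;\gamma_0\cdot\frac{\pi^{2}e^{4}}{2}.$$
Substituting $(\alpha_\beta/\alpha_0)^{2/\gamma}=4^{1-\beta}\cdot 8\pi^{2}(1-\beta)/\alpha_0^{1-\beta}$ rewrites this as $\gamma_0\leq \dfrac{16\cdot 4^{1-\beta}(1-\beta)}{e^{4}\alpha_0^{1-\beta}}$, which is strictly smaller than $\dfrac{1024(1-\beta)}{\alpha_0^{1-\beta}}$ (since $64e^{4}>4\geq 4^{1-\beta}$), contradicting the lower bound on $\gamma_0$ assumed in $(H_5)$. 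The main technical obstacle is the delicate Laplace-type asymptotic of the annular integral and the careful bookkeeping of the constant that ultimately determines the threshold on $\gamma_0$.
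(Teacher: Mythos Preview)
Your overall strategy coincides with the paper's: argue by contradiction, locate the maximizer $t_n>0$, derive the critical relation $t_n^{2}=\int_B f(x,t_nv_n)t_nv_n\,dx$, deduce $t_n^{\gamma}\ge \alpha_\beta/\alpha_0$ from $F\ge 0$, prove $(t_n)$ is bounded via $(H_5)$ and the pointwise lower bound on $v_n^{\gamma}$ over $\{|x|\le n^{-1/4}\}$, conclude $t_n^{\gamma}\to \alpha_\beta/\alpha_0$, and finally extract from the annular integral a lower bound that contradicts the threshold on $\gamma_0$ in $(H_5)$.

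The one substantive divergence is in how the annular integral is bounded below. The paper takes the elementary route: after using $t_n^{\gamma}\ge \alpha_\beta/\alpha_0$ to replace $\mu$ by $1$, it applies the crude inequality $e^{A(s^{2}-4s)}\ge e^{-4As}$ (i.e.\ drops the positive $s^{2}$ term) and integrates explicitly, obtaining the constant $\pi^{2}/32$, which already forces $\gamma_0\le 1024(1-\beta)/\alpha_0^{1-\beta}$. You instead perform a Laplace-type analysis near the inner radius $y=L$ and obtain the much larger constant $\pi^{2}e^{4}/2$. Both constants yield the desired contradiction; the paper's computation is shorter and avoids any asymptotic analysis, while yours gives a sharper bound on $\gamma_0$ at the cost of a more delicate argument.

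Two small points to tighten. First, you write that the Laplace estimate ``identifies the annular integral's limit as $\pi^{2}e^{4}/2$''; strictly speaking you only obtain a \emph{lower bound} of that size, since you have not controlled the rate at which $\mu-1\to 0$ (so $e^{4L(\mu-1)}$ need not tend to $1$). The lower bound is nonetheless valid because $\mu\ge 1$ ensures $e^{4\mu y^{2}/L}\ge e^{4y^{2}/L}$, and with $\mu=1$ the integral already exceeds $\pi^{2}e^{4}/2$ in the limit. Second, since $(H_5)$ is only available where $t_nv_n$ is large, you should remark explicitly (as the paper does via the sets $\mathcal{A}_n,\mathcal{C}_n$) that the Laplace contribution is concentrated near $|x|=n^{-1/4}$, where $t_nv_n\to\infty$, so the restriction to $\{t_nv_n\ge t_\varepsilon\}$ costs nothing in that region.
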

\begin{proof} By contradiction, suppose that for all $n\geq1$,
$$ \max_{t\geq 0}\mathcal{J}(tv_{n})\geq\displaystyle\frac{1}{2}(\frac{\alpha_{\beta}}{\alpha_{0}})^{\frac{2}{\gamma}}\cdot$$
Therefore,
for any $n\geq1$, there exists $t_{n}>0$ such that
$$\max_{t\geq0}\mathcal{J}(tv_{n})=\mathcal{J}(t_{n}v_{n})\geq \displaystyle\frac{1}{2}(\frac{\alpha_{\beta}}{\alpha_{0}})^{\frac{2}{\gamma}}$$
and so,
$$\frac{1}{2}t_{n}^{2}-\int_{B}F(x,t_{n}v_{n})dx\geq \displaystyle\frac{1}{2}(\frac{\alpha_{\beta}}{\alpha_{0}})^{\frac{2}{\gamma}}\cdot$$
Then, by using $(H_{1})$
   \begin{equation}\label{eq:4.4}
t_{n}^{2}\geq\displaystyle(\frac{\alpha_{\beta}}{\alpha_{0}})^{\frac{2}{\gamma}}\cdot
       \end{equation}
On the other hand,
 $$\frac{d}{dt}\mathcal{J}(tv_{n})\big|_{t=t_{n}}=t_{n}-\int_{B}f(x,t_{n}v_{n})v_{n}dx=0,$$
that is
\begin{equation}\label{eq:4.5}
t_{n}^{2}=\int_{B}f(x,t_{n}v_{n})t_{n}v_{n}dx.
\end{equation}
Now, we claim that the sequence $(t_{n})$ is bounded in $(0,+\infty)$. \\Indeed,
 it follows from $(H_{5})$ that for all $\varepsilon>0$, there exists
$t_{\varepsilon}>0$ such that
\begin{equation}\label{eq:4.6}
f(x,t)t\geq
(\gamma_{0}-\varepsilon)e^{\alpha_{0}t^{\gamma}}~~\forall
|t|\geq t_{\varepsilon},~~\mbox{uniformly in}~~x\in B.
\end{equation}
Using (\ref{eq:4.4}) and (\ref{eq:4.5}), we get
\begin{equation*}\label{eq:3.13}
t^{2}_{n}=\int_{B}f(x,t_{n}v_{n})t_{n}v_{n}dx\geq
\int_{0\leq |x|\leq \frac{1}{\sqrt[4]{n}}}f(x,t_{n}v_{n})t_{n}v_{n}dx
\cdot\end{equation*}
Since $$\frac{t_{n}}{\|w_{n}\|}\big(\frac{\log e\sqrt[4]{n} }{\alpha_{\beta}}\big)^{\frac{1}{\gamma}} \rightarrow\infty~~\mbox{as}~~n\rightarrow+\infty,$$ then
it follows from  (\ref{eq:4.6}) that for all $\varepsilon >0$, there exists $n_{0}$ such that for all $n\geq n_{0}$
\begin{equation*}\label{eq:4.13}
t^{2}_{n}\geq \displaystyle (\gamma_{0}-\varepsilon)\int_{0\leq |x|\leq \frac{1}{\sqrt[4]{n}}}e^{\alpha_{0}t^{\gamma}_{n}v^{\gamma}_{n}}dx
  \end{equation*}
  \begin{equation}\label{eq:4.7}
 t^{2}_{n}\geq \displaystyle 2\pi^{2}  (\gamma_{0}-\varepsilon)\int_{0}^{\frac{1}{\sqrt[4]{n}}}r^{3}e^{\alpha_{0}t^{\gamma}_{n}( \displaystyle\big(\frac{\log (e\sqrt[4]{n})}{\alpha_{\beta}}\big) +o(1)}dr
.\end{equation}
Hence,
 \begin{equation*}\label{eq:4.13}
1 \geq  \displaystyle 2\pi^{2}  (\gamma_{0}-\varepsilon)~~\displaystyle e^{\alpha_{0}t^{\gamma}_{n}( \displaystyle\big(\frac{\log (e\sqrt[4]{n})}{\alpha_{\beta}}\big) +o(1))-3\log n-2\log t_{n}}.
 \end{equation*}
 Therefore $(t_{n})$ is bounded. Also, we have from the formula  (\ref{eq:4.5}) that
$$\displaystyle\lim_{n\rightarrow+\infty} t_{n}^{2}\geq\displaystyle(\frac{\alpha_{\beta}}{\alpha_{0}})^{\frac{2}{\gamma}}\cdot$$
Now, suppose that
$$\displaystyle\lim_{n\rightarrow+\infty} t_{n}^{2}>\displaystyle(\frac{\alpha_{\beta}}{\alpha_{0}})^{\frac{2}{\gamma}},$$
then for $n$ large enough, there exists some $\delta>0$ such that $ t_{n}^{\gamma}\geq \frac{\alpha_{\beta}}{\alpha_{0}}+\delta$. Consequently the right hand side of (\ref{eq:4.7}) tends to infinity and this contradicts the boudness of  $(t_{n})$. Since $(t_{n})$ is bounded, we get
\begin{equation}\label{eq:4.8}
 \displaystyle
 \lim_{n\rightarrow+\infty}t_{n}^{2}=\displaystyle(\frac{\alpha_{\beta}}{\alpha_{0}})^{\frac{2}{\gamma}}\cdot
 \end{equation}
Let $$\mathcal{A}_{n}=\{x\in B| t_{n}v_{n}\geq
t_{\varepsilon}\}~~\mbox{and}~~\mathcal{C}_{n}=B\setminus \mathcal{A}_{n},$$
$$\begin{array}{rclll} t_{n}^{2}&=&\displaystyle\int_{B}f(x,t_{n}v_{n})t_{n}v_{n}dx=\int_{\mathcal{A}_{n}}f(x,t_{n}v_{n})t_{n}v_{n}dx+\int_{\mathcal{C}_{n}}f(x,t_{n}v_{n})t_{n}v_{n} $$\\
&\geq& \displaystyle(\gamma_{0}-\varepsilon)\int_{\mathcal{A}_{n}}e^{\alpha_{0}t_{n}^{\gamma}v_{n}^{\gamma}}dx + \int_{\mathcal{C}_{n}}f(x,t_{n}v_{n})t_{n}v_{n}dx\\
&=&\displaystyle(\gamma_{0}-\varepsilon)\int_{B}e^{\alpha_{0}t_{n}^{\gamma}v_{n}^{\gamma}}dx-
(\gamma_{0}-\varepsilon)\int_{\mathcal{C}_{n}}e^{\alpha_{0}t_{n}^{\gamma}v_{n}^{\gamma}}dx\\ &+&\displaystyle\int_{\mathcal{C}_{n}}f(x,t_{n}v_{n})t_{n}v_{n}dx.
\end{array}$$
Since $v_{n}\rightarrow 0 ~~\mbox{a.e in }~~B$,
$\chi_{\mathcal{C}_{n}}\rightarrow1~~\mbox{a.e in}~~B$, therefore using the dominated convergence
theorem, we get $$\displaystyle\int_{\mathcal{C}_{n}}f(x,t_{n}v_{n})t_{n}v_{n}dx\rightarrow 0~~\mbox{and}~~\int_{\mathcal{C}_{n}}e^{\alpha_{0}t_{n}^{\gamma}v_{n}^{\gamma}}dx\rightarrow \frac{\pi^{2}}{2}\cdot$$Then,$$\ \lim_{n\rightarrow+\infty} t_{n}^{2}=\displaystyle(\frac{\alpha_{\beta}}{\alpha_{0}})^{\frac{2}{\gamma}}\geq(\gamma_{0}-
\varepsilon)\lim_{n\rightarrow+\infty}\int_{B}e^{\alpha_{0}t_{n}^{\gamma}v_{n}^{\gamma}}dx-(\gamma_{0}-
\varepsilon)\frac{\pi^{2}}{2}\cdot$$
On the other hand,
$$\int_{B}e^{\alpha_{0}t_{n}^{\gamma}v_{n}^{\gamma}}dx \geq \int_{\frac{1}{\sqrt[4]{n}}\leq |x|\leq \frac{1}{2}}e^{\alpha_{0}t_{n}^{\gamma}v_{n}^{\gamma}}dx+\int_{\mathcal{C}_{n}} e^{\alpha_{0}t_{n}^{\gamma}v_{n}^{\gamma}}dx\cdot$$

Then, using (\ref{eq:4.4})  $$\lim_{n\rightarrow+\infty} t_{n}^{2}\geq \lim_{n\rightarrow+\infty}\displaystyle(\gamma_{0}-\varepsilon)\int_{B}e^{\alpha_{0}t_{n}^{\gamma}v_{n}^{\gamma}}dx\geq \displaystyle\lim_{n\rightarrow+\infty}(\gamma_{0}-\varepsilon) 2\pi^{2}\int^{\frac{1}{2}}_{\frac{1}{\sqrt[4]{n}}}r^{3} e^{\frac{4 \big(\log \frac{e}{r}\big)^{2}}{\log (e\sqrt[4]{n})\|w_{n}\|^{\gamma}}} dr$$
and, making the change of variable $$s=\frac{4\log \frac{e}{r}}{\log (e\sqrt[4]{n})\|w_{n}\|^{\gamma}},$$
we get
$$\begin{array}{rclll}\displaystyle \lim_{n\rightarrow+\infty} t_{n}^{2}&\geq & \displaystyle\lim_{n\rightarrow+\infty}\displaystyle(\gamma_{0}-\varepsilon)\int_{B}e^{\alpha_{0}t_{n}^{\gamma}v_{n}^{\gamma}}dx\\\\
&\geq&\displaystyle \lim_{n\rightarrow+\infty}2\pi^{2}(\gamma_{0}-\varepsilon)\frac{\|w_{n}\|^{\gamma}\log (e\sqrt[4]{n})}{4}e^{4}\int^{\frac{4}{\|w_{n}\|^{\gamma}}}_{\frac{4 \log 2e}{\|w_{n}\|^{\gamma}\log (e\sqrt[4]{n})}}\displaystyle e^{\frac{\|w_{n}\|^{\gamma}\log (e\sqrt[4]{n})}{4}~(s^{2}-4s)}ds\\\\
&\geq&\displaystyle \lim_{n\rightarrow+\infty}2\pi^{2}(\gamma_{0}-\varepsilon)\frac{\|w_{n}\|^{\gamma}\log (e\sqrt[4]{n})}{4}e^{4}\int^{\frac{4}{\|w_{n}\|^{\gamma}}}_{\frac{4\log 2e}{\|w_{n}\|^{\gamma}\log (e\sqrt[4]{n})}}e^{-\frac{\|w_{n}\|^{\gamma}\log (e\sqrt[4]{n})}{4}~4s}ds\\
&=&\displaystyle\lim_{n\rightarrow+\infty}(\gamma_{0}-\varepsilon)\frac{\pi^{2}}{2}e^{4}(-e^{-4 \log e\sqrt[4]{n}}+e^{-4\log (2e)}\big)\\
&=&\displaystyle(\gamma_{0}-\varepsilon)\frac{\pi^{2}e^{4(1-\log 2e)}}{2}=\displaystyle(\gamma_{0}-\varepsilon)\frac{\pi^{2} }{32}\cdot
\end{array}$$

It follows that
\begin{equation*}\label{eq:4.15}
\displaystyle(\frac{\alpha_{\beta}}{\alpha_{0}})^{\frac{2}{\gamma}}\geq (\gamma_{0}-\varepsilon)\frac{\pi^{2}}{32}
\end{equation*}
for all $\varepsilon>0$. So,
$$\gamma_{0}\leq \frac{1024(1-\beta)}{\alpha^{1-\beta}_{0}}, $$
   which is in contradiction with  the condition $(H_{5})$. \\Now by Proposition \ref{Prop3.1}, the functional $\mathcal{J}$ satisfies the $(PS)$ condition at a level $c<\displaystyle\frac{1}{2}(\frac{\alpha_{\beta}}{\alpha_{0}})^{\frac{2}{\gamma}}$ . So, by  Lemma \ref{lem2.2} and Lemma \ref{lem2.4}, we deduce that the functional $\mathcal{J}$ has a nonzero critical point $u$ in $\mathbf{W}$. From maximum principle, the solution $u$ of the problem  (\ref{eq:1.1.1}) is positive. The Theorem \ref{th1.3} is proved.
   \end{proof}

\end{document}